\title{A bound for the Waring rank of the determinant via syzygies}
\date{November 8, 2019}
\author{Mats Boij}
\address{KTH - Royal Institute of Technology, SE-100 44 Stockholm, Sweden }
\email{boij@kth.se}
\author{Zach Teitler}
\address{Department of Mathematics\\
Boise State University\\
1910 University Drive\\
Boise, ID 83725-1555, USA}
\email{zteitler@boisestate.edu}
\newcommand{\bbP}{\mathbb{P}}
\newcommand{\field}{\Bbbk}
\DeclareMathOperator{\borderrank}{brk}
\DeclareMathOperator{\cactusrank}{crk}
\DeclareMathOperator{\bordercactusrank}{bcrk}
\DeclareMathOperator{\Derivs}{Derivs}
\DeclareMathOperator{\dett}{det}
\DeclareMathOperator{\per}{per}
\DeclareMathOperator{\GL}{GL}
\DeclareMathOperator{\rank}{rk}
\DeclareMathOperator{\SL}{SL}
\DeclareMathOperator{\sspan}{span}
\DeclareMathOperator{\Spec}{Spec}
\DeclareMathOperator{\Sym}{Sym}
\DeclareMathOperator{\tr}{tr}
\newcommand{\defining}[1]{\textbf{#1}}
\newcommand{\apolarityaction}{\mathbin{\circ}}
\renewcommand{\aa}{\apolarityaction}
\newtheorem{theorem}{Theorem}
\newtheorem{lemma}[theorem]{Lemma}
\newtheorem{proposition}[theorem]{Proposition}
\newtheorem{corollary}[theorem]{Corollary}
\theoremstyle{remark}
\newtheorem{remark}[theorem]{Remark}
\newtheorem*{remark*}{Remark}
\begin{document}

\begin{abstract}
We show that the Waring rank of the $3 \times 3$ determinant,
previously known to be between $14$ and $18$, is at least $15$.
We use syzygies of the apolar ideal, which have not been used in this way before.
Additionally, we show that the symmetric cactus rank of the $3 \times 3$ permanent is at least $14$.
\end{abstract}

\subjclass[2010]{15A21, 15A69, 14N15, 13D02}
\keywords{Waring rank, symmetric rank, symmetric cactus rank, determinants, permanents, syzygies}

\maketitle

\section{Introduction}

Let $F \in \field[x_1,\dotsc,x_n]$ be a homogeneous polynomial of degree $d$.
The \defining{Waring rank} of $F$, denoted $\rank(F)$, is the least number of terms $r$
in an expression for $F$ as a combination of $d$th powers of linear forms,
$F = c_1 \ell_1^d + \dotsb + c_r \ell_r^d$, for some $c_i \in \field$ and linear forms $\ell_i$,
or $\infty$ if no such expression exists.
For example, $xy = \frac{1}{4}(x+y)^2 - \frac{1}{4}(x-y)^2$ (as long as $\field$ has characteristic not equal to $2$),
so $\rank(xy) \leq 2$, and in fact $\rank(xy) = 2$:
if $\rank(xy) < 2$, then $xy = c \ell^2$ for some $\ell$, contradicting uniqueness of factorization.
Similarly,
\[
  xyz = \frac{1}{24} \Big\{ (x+y+z)^3 - (x+y-z)^3 - (x-y+z)^3 - (-x+y+z)^3 \Big\},
\]
showing $\rank(xyz) \leq 4$ (as long as $\field$ has characteristic not equal to $2$ or $3$);
and one can show that $\rank(xyz) = 4$.
As these examples suggest, upper bounds for Waring rank often come from explicit expressions,
while lower bounds must come from other considerations.
The Waring rank is finite for all forms of degree $d$ strictly less than the characteristic of $\field$,
or all $d$ if $\field$ has characteristic zero.

Recent introductions to Waring rank include
\cite{MR3213518},
\cite{MR2865915},
\cite{Teitler:2014gf},
and the comprehensive \cite{MR1735271}.
Interest in Waring rank arises from the intimate connection with the geometry of secant varieties
(see, for example, \cite{Alessandra-Bernardi:2018aa})
and from connections with geometric complexity theory \cite{MR3343444}.
See also very recently observed connections with parametrized algorithms \cite{Pratt:2018aa}.

Waring rank is notoriously difficult to compute.
In fact, computing Waring rank is NP-hard \cite[Conjecture 13.2]{MR3144915}, \cite[Theorem 6]{Shitov:2016aa}.
Polynomials whose Waring rank is known include
quadratic forms ($d=2$) and binary forms ($n=2$),
general polynomials
\cite{MR1311347,MR2886162},
monomials
\cite{MR2966824,MR3017012},
certain highly symmetric products of linear forms \cite{MR3306076},
reducible cubic forms \cite{MR3506475},
and elementary symmetric polynomials of odd degree \cite{MR3440150}.
Algorithms to compute Waring rank have been studied (see, for example, \cite{MR2736103})
but are not practical in most cases.

For $d \geq 1$ let $\dett_d \in \field[x_{1,1},\dotsc,x_{d,d}]$ be the generic determinant, 
$\dett_d = \det(x_{i,j})_{1 \leq i,j\leq d}$.
It is well known that $\rank(\dett_2) = 4$, but no other values are known (apart from the trivial case $d=1$).
It is known that $14 \leq \rank(\dett_3) \leq 18$; in fact there have been several proofs of $\rank(\dett_3) \geq 14$,
which we review below.

Our main result is that $\rank(\dett_3) \geq 15$, which we show using syzygies of the apolar ideal.
We also show that the symmetric cactus rank of the $3 \times 3$ permanent is greater than or equal to $14$
(it was previously known to be greater than or equal to $10$).
Along the way, we also give a new proof of the known results
that the Waring rank and symmetric cactus rank of $\dett_3$, and the Waring rank of the $3 \times 3$ permanent, are greater than or equal to $14$.

\section{Background}

In this section we review background material, including the important technical tool of apolarity;
cactus rank and border rank;
conciseness and indecomposability;
and a review of previous results on Waring rank, cactus rank, and border rank of the determinant and permanent.

\subsection{Apolarity}

The primary tool in most studies of Waring rank, including this one, is apolarity.
We review the needed material here.
For a more thorough treatment see, for example, \cite{MR1735271}.

Let $S = \field[x_1,\dotsc,x_n]$ be a polynomial ring.
Denote by $V = S_1 = \sspan\{x_1,\dotsc,x_n\}$ the vector space of linear forms in $S$;
we identify $S$ with the symmetric algebra $S(V)$.
We denote by $y_1,\dotsc,y_n$ the dual basis of the dual vector space $V^*$,
and set $T = S(V^*)$, which we identify with $\field[y_1,\dotsc,y_n]$.
We let elements of $T$ act on $S$, with each $y_i$ acting as the operator $\frac{\partial}{\partial x_i}$.
We denote this action by $\aa$, as in $y_i \aa x_i^4 = 4 x_i^3$; explicitly,
\[
  y_1^{a_1} \dotsm y_n^{a_n} \aa x_1^{d_1} \dotsm x_n^{d_n} = \begin{cases}
    \prod_{i=1}^n \frac{d_i!}{(d_i-a_i)!} x_i^{d_i-a_i}, & \text{if $a_i \leq d_i$ for all $i$}, \\
    0 & \text{otherwise},
  \end{cases}
\]
extended bilinearly over sums.
This makes $S$ into a $T$-algebra, and in fact a graded $T$-algebra, although the grading as a $T$-algebra is
reversed from the standard grading as a polynomial ring:
the action takes $T_a \times S_d \to S_{d-a}$, where $S_k = 0$ if $k < 0$.

When we discuss matrix polynomials such as the determinant, which depend on the entries of a matrix,
we may use doubly-indexed variables such as $x_{1,1},\dotsc,x_{d,d}$ and corresponding dual variables $y_{1,1},\dotsc,y_{d,d}$.

Let $F \in S$ be a homogeneous form of degree $d$.
We denote by $\Derivs(F)$ the vector space spanned by $F$ and all of its derivatives of all orders,
equivalently the $T$-submodule of $S$ generated by $F$.
The \defining{apolar ideal} of $F$, or \defining{annihilating ideal}, denoted $F^\perp$,
is the ideal $\{ \theta \in T \colon \theta \aa F = 0 \}$.
It is a homogeneous ideal.
It is the kernel of the quotient map $T \to \Derivs(F) \cong T/F^\perp$, $\theta \mapsto \theta \aa F$.

An ideal $I$ is called \defining{apolar to $F$} if $I \subseteq F^\perp$.
A scheme $Z$ is called \defining{apolar to} $F$ if its defining ideal is apolar to $F$.
When $Z$ is a projective scheme, by the defining ideal we mean the saturated ideal of $Z$.

The relevance of apolarity for Waring rank is largely (perhaps entirely) due to the following well-known statement.

\begin{lemma}[Apolarity Lemma]
Let $S = \field[x_1,\dotsc,x_n]$
and $V = S_1 = \sspan\{x_1,\dotsc,x_n\}$.
Let $F \in S$ be a homogeneous polynomial of degree $d$.
Assume $\field$ has characteristic $0$ or $>d$.
Let $Z \subset \bbP V$ be a closed subscheme.
Let $\nu_d : \bbP V \to \bbP S_d$ be the $d$th Veronese map, and let $\nu_d(Z)$ be the image of $Z$ under this map.
Then $[F] \in \bbP S_d$ lies in the linear span of $\nu_d(Z)$ if and only if $Z$ is apolar to $F$.
\end{lemma}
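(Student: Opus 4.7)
The plan is to convert the geometric condition $[F] \in \sspan \nu_d(Z)$ into an algebraic condition on ideals via a duality pairing in degree $d$, and then upgrade the resulting degree-$d$ containment to a full ideal containment.

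The main ingredient is that the apolarity action restricted to matching degrees, $T_d \times S_d \to \field$, $(\theta, G) \mapsto \theta \aa G$, is a perfect pairing: in the monomial bases one has $y^a \aa x^b = a!\,\delta_{a,b}$, and the factorials $a! = a_1!\cdots a_n!$ (with $|a|=d$) are units in $\field$ under the characteristic hypothesis. Consequently every hyperplane in $\bbP S_d$ has the form $H_\theta = \{[G] : \theta \aa G = 0\}$ for a unique $[\theta] \in \bbP T_d$. The direct computation $\theta \aa \ell^d = d!\, \theta(\ell)$ for $\ell \in V$ and $\theta \in T_d$ shows that $\nu_d([\ell]) \in H_\theta$ precisely when $\theta$ vanishes at $[\ell]$; the scheme-theoretic version, using that $I_Z$ is the saturated defining ideal of $Z$ and that $\nu_d$ is a closed embedding, is that $\nu_d(Z) \subseteq H_\theta$ iff $\theta \in (I_Z)_d$. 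Intersecting over all such hyperplanes yields
\[
  [F] \in \sspan \nu_d(Z) \iff (I_Z)_d \subseteq (F^\perp)_d.
\]

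The remaining step is to pass from the single degree $d$ to all degrees, which is the only place any real algebra is needed. One direction is immediate. For the converse, assume $(I_Z)_d \subseteq (F^\perp)_d$ and take $\theta \in (I_Z)_e$. If $e > d$, then $\theta \aa F = 0$ automatically since $F$ has degree $d$. If $e \leq d$, then for every $\eta \in T_{d-e}$ the product $\eta\theta$ lies in $(I_Z)_d \subseteq (F^\perp)_d$, so $\eta \aa (\theta \aa F) = (\eta\theta) \aa F = 0$; hence $\theta \aa F \in S_{d-e}$ is annihilated by the whole of $T_{d-e}$, and the perfect pairing in degree $d-e$ (valid under the same characteristic hypothesis) forces $\theta \aa F = 0$. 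This gives $I_Z \subseteq F^\perp$.

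What I expect to be the one genuinely non-formal point is the scheme-theoretic description of the span of $\nu_d(Z)$: for non-reduced or positive-dimensional $Z$, one has to verify that saturation of $I_Z$ is exactly what makes the intersection of the hyperplanes $H_\theta$, $\theta \in (I_Z)_d$, coincide with the linear span of $\nu_d(Z)$. Everything else is a formal consequence of the perfect pairing and the module structure of $S$ over $T$.
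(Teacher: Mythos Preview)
The paper does not give its own proof of the Apolarity Lemma; immediately after the statement it simply refers the reader to \cite[\textsection 4]{MR3742979} and \cite[\textsection 4.1]{Teitler:2014gf}. So there is nothing in the paper to compare your argument against.

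That said, your sketch is correct and is essentially the standard proof one finds in those references: identify hyperplanes in $\bbP S_d$ with points of $\bbP T_d$ via the perfect pairing, observe that $H_\theta \supseteq \nu_d(Z)$ iff $\theta \in (I_Z)_d$, conclude that $[F] \in \sspan \nu_d(Z)$ iff $(I_Z)_d \subseteq (F^\perp)_d$, and then upgrade the degree-$d$ containment to a full ideal containment using the $T$-module structure on $S$ together with nondegeneracy of the pairing in degrees $\leq d$. You have also correctly isolated the one genuinely scheme-theoretic step, namely matching the linear forms vanishing on $\nu_d(Z)$ with $(I_Z)_d$ when $Z$ is not reduced; this is where saturation of $I_Z$ is used.
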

Here the linear span of a closed subscheme of projective space is the smallest reduced linear subspace containing the scheme,
equivalently the linear subspace defined by the linear forms in the saturated ideal of the scheme.
For a proof of the Apolarity Lemma see, for example,
\cite[\textsection 4]{MR3742979}
or
\cite[\textsection 4.1]{Teitler:2014gf}.

For Waring rank, the most relevant case is when $Z$ consists of a finite, reduced set of points.
Suppose $Z = \{[\ell_1],\dotsc,[\ell_r]\} \subset \bbP S_1$; here each $\ell_i$ is a linear form.
Then $\nu_d(Z) = \{[\ell_1^d],\dotsc,[\ell_r^d]\}$, and to say that $[F]$ lies in the linear span of $\nu_d(Z)$
is precisely to say that $F = c_1 \ell_1^d + \dotsc + c_r \ell_r^d$ for some scalars $c_i$.
Thus the Waring rank of $F$ is equal to the minimum degree of a zero-dimensional reduced projective scheme apolar to $F$,
equivalently, the minimum degree of a one-dimensional radical homogeneous ideal $I$ contained in $F^\perp$.

This (reduced, zero-dimensional) version of the Apolarity Lemma may be found in many references, for example
\cite[Lemma 1.15, Theorem 5.3]{MR1735271}, \cite[\textsection 1.3]{MR1780430}, \cite[Propositions 1.3.9-10]{MR2964027}.
It goes back considerably further, to the 1851 work of Sylvester \cite{Sylvester:1851wd}, in the case of binary forms ($n=2$).

\subsection{Cactus rank and border rank}
When homogeneous polynomials are identified with symmetric tensors,
Waring rank of a homogeneous polynomial is identified with the \defining{symmetric rank} of a symmetric tensor.

The \defining{symmetric cactus rank} of a homogeneous form $F \in S_d$, denoted $\cactusrank(F)$,
is the minimum degree of a zero-dimensional, not necessarily reduced, closed subscheme $Z \subset \bbP(V)$
apolar to $F$ (so that $[F]$ lies in the span of $\nu_d(Z)$).
Equivalently, it is the minimum degree of a one-dimensional saturated, but not necessarily reduced, homogeneous
ideal $I \subset F^\perp$.
Evidently $\cactusrank(F) \leq \rank(F)$.

The \defining{symmetric border rank} of a homogeneous form $F \in S_d$, denoted $\borderrank(F)$,
is the minimum $r$ such that $F$ lies in the Zariski closure of the locus of forms of rank less than or equal to $r$.
This locus, or its projectivization, is the $r$th secant variety of the Veronese variety:
the border rank of $F$ is less than or equal to $r$ if and only if $[F]$ lies in the secant variety of $\bbP^{r-1}$s spanned by points on the Veronese variety.
Evidently $\borderrank(F) \leq \rank(F)$.

The \defining{symmetric border cactus rank} of a homogeneous form $F \in S_d$, denoted $\bordercactusrank(F)$,
is the minimum $r$ such that $F$ lies in the Zariski closure of the locus of forms of cactus rank less than or equal to $r$.
Evidently $\bordercactusrank(F) \leq \borderrank(F)$ and $\bordercactusrank(F) \leq \cactusrank(F)$.

\begin{remark}
One may similarly define the rank, cactus rank, border rank, and border cactus rank of general (non-symmetric) tensors.
In that context it is important to distinguish the rank from the symmetric rank, cactus rank from the symmetric cactus rank, etc.

Here, however, we deal exclusively with homogeneous polynomials and with the symmetric versions of these ranks.
Therefore we use the shorter terms
\emph{rank} (or Waring rank), \emph{border rank}, \emph{cactus rank}, and \emph{border cactus rank},
but we always mean the symmetric versions of these quantities.
\end{remark}

An analogue of the Apolarity Lemma for border rank has been introduced very recently, see \cite{Buczynska:2019aa}.
For several more variant notions of rank, see \cite{MR3250539}.
For examples with $\cactusrank(F_1) > \borderrank(F_1)$ and $\cactusrank(F_2) < \borderrank(F_2)$, see \cite{MR3333949}.

\subsection{Conciseness and indecomposability}

For a homogeneous form $F \in S_d = \Sym^d(V)$, the following are equivalent:
\begin{enumerate}
\item $F$ cannot be written as a polynomial in fewer variables, even after a linear change of coordinates.
More precisely, if $F \in \Sym^d(W)$ for some $W \subseteq V$, then $W=V$.
\item The projective hypersurface defined by $F=0$ is not a cone.
\item $\Derivs(F)_1 = S_1 = V$.
\item $F^\perp$ does not contain any linear forms.
\end{enumerate}
A form meeting these conditions is called \defining{concise} with respect to $V$ \cite{MR2279854}.

In general, as long as $\field$ has characteristic $0$ or $>d$,
an arbitrary degree $d$ form $F$ lies in $\Sym^d(\Derivs(F)_1)$,
and is always concise with respect to $\Derivs(F)_1$.
The \defining{essential variables} of $F$ are the elements of the subspace $\Derivs(F)_1$,
or just the elements of a basis for it.
For example, the essential variables of $xy+xz$ are $x$ and $y+z$.

A \defining{direct sum decomposition} of a homogeneous form $F \in S_d$ is
an expression
\[
  F = G(\ell_1,\dotsc,\ell_k) + H(\ell_{k+1},\dotsc,\ell_n)
\]
where $G,H$ are nonzero forms of degree $d$ and
\[
  S_1 = \sspan\{\ell_1,\dotsc,\ell_k\} \oplus \sspan\{\ell_{k+1},\dotsc,\ell_n\} ,
\]
equivalently $\ell_1,\dotsc,\ell_n \in S_1$ are linearly independent.
A form is \defining{indecomposable} (as a direct sum) if it admits no direct sum decomposition.
For example, $\dett_2 = ad-bc$ is decomposable, with the decomposition $G=ad$, $H=-bc$;
$xy$ cannot be decomposed as $xy = G(x)+H(y)$, but $xy = \frac{1}{4}(x+y)^2 - \frac{1}{4}(x-y)^2$,
so $xy$ is decomposable with $\ell_1 = x+y$, $\ell_2 = x-y$.
See \cite{MR3426613} for more on direct sum decompositions and indecomposable forms.

\subsection{Previous results on the determinant and permanent}

By Laplace expansion, $\dett_3$ and $\per_3$ are each the sum of $6$ monomials of the form $xyz$.
Expanding each monomial as a sum of powers, then, $\rank(\dett_3)$ and $\rank(\per_3)$ are each at most $6 \rank(xyz) = 24$.
For larger $d$, we have
\begin{equation}
\label{equation: naive upper bound for det and per}
  \rank(\dett_d), \rank(\per_d) \leq d! \cdot \rank(x_1 \dotsm x_d) = d! 2^{d-1} .
\end{equation}

Better upper bounds follow from identities found by Derksen \cite{MR3494510}, Krishna--Makam \cite{Krishna:2018aa},
Glynn \cite{MR2673027},
and Conner--Gesmundo--Landsberg--Ventura \cite{Conner:2019aa}.
The identity of Krishna--Makam is
\[ 
\begin{split}
  \det\nolimits_3 &= x_{1,1} \, (x_{2,2}+x_{2,3}) \, (x_{3,1}+x_{3,3}) \\
    & \quad + (x_{1,2} + x_{1,3}) \, x_{2,1} \, x_{3,2} \\
    & \quad - (x_{1,1} + x_{1,3}) \, x_{2,2} \, x_{3,1} \\
    & \quad - x_{1,2} \, (x_{2,1} + x_{2,3}) \, (x_{3,2} + x_{3,3}) \\
    & \quad + (x_{1,2} - x_{1,1}) \, x_{2,3} \, (x_{3,1} + x_{3,2} + x_{3,3}).
\end{split}
\]
(Derksen had earlier found a similar identity.)
The identity implies $\rank(\dett_3) \leq 5 \rank(xyz) = 20$.
As observed by Derksen, Laplace expansion shows inductively that
\begin{equation}
\label{equation: derksen upper bound for det}
  \rank(\dett_d) \leq \left( \frac{5}{6} \right)^{\lfloor d/3 \rfloor} d! 2^{d-1} .
\end{equation}

Glynn's identity is
\[
  \per_d
  = \frac{1}{2^{d-1}} \sum_{\substack{\epsilon \in \{\pm1\}^d \\ \epsilon_1 = 1}} \prod_{i=1}^d \sum_{j=1}^d \epsilon_i \epsilon_j x_{i,j} .
\]
For example,
\[
\begin{split}
  4 \per_3 &= (x_{1,1} + x_{1,2} + x_{1,3})(x_{2,1} + x_{2,2} + x_{2,3})(x_{3,1} + x_{3,2} + x_{3,3}) \\
    & \quad - (x_{1,1} + x_{1,2} - x_{1,3})(x_{2,1} + x_{2,2} - x_{2,3})(x_{3,1} + x_{3,2} - x_{3,3}) \\
    & \quad - (x_{1,1} - x_{1,2} + x_{1,3})(x_{2,1} - x_{2,2} + x_{2,3})(x_{3,1} - x_{3,2} + x_{3,3}) \\
    & \quad + (x_{1,1} - x_{1,2} - x_{1,3})(x_{2,1} - x_{2,2} - x_{2,3})(x_{3,1} - x_{3,2} - x_{3,3}) .
\end{split}
\]
This expresses the $d \times d$ permanent as a sum of $2^{d-1}$ terms, each of rank $2^{d-1}$.
Therefore
\begin{equation}
\label{equation: glynn upper bound for per}
  \rank(\per_d) \leq 2^{2d-2} .
\end{equation}
In particular, $\rank(\per_3) \leq 16$.

In \cite{MR3492642} it was shown that $\dett_3$ cannot be written as a sum of products of linear forms with fewer than $5$ summands,
nor can $\per_3$ be written as a sum of products of linear forms with fewer than $4$ summands.
So there is no possibility of getting better upper bounds for Waring rank of $\dett_3$ or $\per_3$ by improving the above identities.
Nevertheless, Conner--Gesmundo--Landsberg--Ventura see \cite{Conner:2019aa} have very recently found
an explicit expression with $18$ terms to show $\rank(\dett_3) \leq 18$.
For brevity, we write a linear form on a space of matrices by simply writing a matrix of its coefficients,
so
\[
  \begin{pmatrix}
    a_{1,1} & \cdots  \\ \vdots
  \end{pmatrix}
  \qquad
  \text{represents the linear form}
  \qquad
  a_{1,1} x_{1,1} + \dotsb .
\]
Let $\theta = \exp(2\pi i/6)$.
Then:
\[
\begin{split}
 18\dett_3 &= \begin{pmatrix} 1 & 0 & 0 \\ 0 & -1 & 0 \\ 0 & 0 & -1 \end{pmatrix}^3 
             + \begin{pmatrix} -\theta & 0 & 0 \\ 0 & -1 & 0 \\ 0 & 0 & \theta^{-1} \end{pmatrix}^3
             + \begin{pmatrix} -\theta^{-1} & 0 & 0 \\ 0 & -1 & 0 \\ 0 & 0 & \theta \end{pmatrix}^3 \\
          &+ \begin{pmatrix} -1 & 0 & 0 \\ 0 & 0 & 1 \\ 0 & 1 & 0 \end{pmatrix}^3
             + \begin{pmatrix} -1 & 0 & 0 \\ 0 & 0 & -\theta^{-1} \\ 0 & -\theta & 0 \end{pmatrix}^3
             + \begin{pmatrix} \theta^{-1} & 0 & 0 \\ 0 & 0 & 1 \\ 0 & -\theta & 0 \end{pmatrix}^3 \\
          &+ \begin{pmatrix} 0 & -1 & 0 \\ 1 & 0 & 0 \\ 0 & 0 & 1 \end{pmatrix}^3
             + \begin{pmatrix} 0 & \theta & 0 \\ -\theta^{-1} & 0 & 0 \\ 0 & 0 & 1 \end{pmatrix}^3
             + \begin{pmatrix} 0 & \theta^{-1} & 0 \\ -\theta & 0 & 0 \\ 0 & 0 & 1 \end{pmatrix}^3 \\
          &+ \begin{pmatrix} 0 & 1 & 0 \\ 0 & 0 & -1 \\ -1 & 0 & 0 \end{pmatrix}^3
             + \begin{pmatrix} 0 & -\theta & 0 \\ 0 & 0 & \theta^{-1} \\ -1 & 0 & 0 \end{pmatrix}^3
             + \begin{pmatrix} 0 & -\theta^{-1} & 0 \\ 0 & 0 & \theta \\ -1 & 0 & 0 \end{pmatrix}^3 \\
          &+ \begin{pmatrix} 0 & 0 & 1 \\ -1 & 0 & 0 \\ 0 & -1 & 0 \end{pmatrix}^3
             + \begin{pmatrix} 0 & 0 & 1 \\ \theta^{-1} & 0 & 0 \\ 0 & \theta & 0 \end{pmatrix}^3
             + \begin{pmatrix} 0 & 0 & 1 \\ \theta & 0 & 0 \\ 0 & \theta^{-1} & 0 \end{pmatrix}^3 \\
          &+ \begin{pmatrix} 0 & 0 & -1 \\ 0 & 1 & 0 \\ 1 & 0 & 0 \end{pmatrix}^3
             + \begin{pmatrix} 0 & 0 & \theta^{-1} \\ 0 & -\theta & 0 \\ 1 & 0 & 0 \end{pmatrix}^3
             + \begin{pmatrix} 0 & 0 & \theta \\ 0 & -\theta^{-1} & 0 \\ 1 & 0 & 0 \end{pmatrix}^3 ,
\end{split}
\]
see \cite[Theorem 2.11]{Conner:2019aa}.

Conner--Gesmundo--Landsberg--Ventura also showed that $\borderrank(\dett_3) \leq 17$, see \cite[Theorem 2.12]{Conner:2019aa}.
At this time we do not know of upper bounds for the border rank of the permanent beyond
that simply the border rank is less than or equal to the Waring rank.

The rank and border rank of a general cubic form in $9$ variables are $19$ \cite{MR1311347}.
The cactus rank of a general cubic form in $9$ variables is at most $18$ \cite{MR2996880}.

As for lower bounds,
the catalecticant, or flattening, lower bound (see \cite{MR2628829}) shows
\[
  \rank(\dett_3) \geq \borderrank(\dett_3) \geq 9
\]
and
\[
  \rank(\per_3) \geq \borderrank(\per_3) \geq 9
\]
and for all $d$, for $F_d \in \{\dett_d,\per_d\}$,
\begin{equation}
\label{equation: catalecticant lower bound for det and per}
  \rank(F_d) \geq \borderrank(F_d) \geq \binom{d}{\lfloor d/2 \rfloor}^2 \approx \frac{4^d}{\pi d/2}.
\end{equation}
The catalecticant lower bound is also a lower bound for the cactus rank and border cactus rank:
$\cactusrank(\dett_3) \geq \bordercactusrank(\dett_3) \geq 9$, and so on, see \cite[Theorem 5.3D]{MR1735271}.

In \cite{MR2628829} this was improved to $\rank(\dett_3) \geq 14$, $\rank(\per_3) \geq 12$
using a lower bound for $\rank(F)$ in terms of the singularities of the hypersurface defined by $F$.
This also gave improved lower bounds for $\rank(\dett_d)$ and $\rank(\per_d)$, but the improvement was just quadratic in $d$.
Compare the exponential lower bound in \eqref{equation: catalecticant lower bound for det and per},
the exponential upper bound in \eqref{equation: glynn upper bound for per},
and the factorial upper bound in \eqref{equation: derksen upper bound for det}.

Shafiei \cite{MR3316987} used a bound of Ranestad-Schreyer \cite{MR2842085}
to show
\[
  \rank(\dett_3) \geq \cactusrank(\dett_3) \geq 10 ,
  \qquad
  \rank(\per_3) \geq \cactusrank(\per_3) \geq 10 ,
\]
and for all $d$, for $F_d \in \{\dett_d,\per_d\}$,
\[
  \rank(F_d) \geq \cactusrank(F_d) \geq \frac{1}{2}\binom{2d}{d} \approx \frac{4^d}{2\sqrt{\pi d}} .
\]
Although this gives a worse bound for $d=3$, it is asymptotically better than the bound arising from singularities.

Using a bound for the rank of forms invariant under a group action, \cite{MR3390032} showed
$\rank(\dett_3) \geq \cactusrank(\dett_3) \geq 14$,
and
\[
  \rank(\dett_d) \geq \cactusrank(\dett_d) \geq \binom{2d}{d} - \binom{2d-2}{d-1} \approx \frac{3}{4} \binom{2d}{d} \approx \frac{3 \cdot 4^d}{4 \sqrt{\pi d}} .
\]
(The bound for invariant forms does not seem to give any interesting results for the permanent, because its stabilizer is too small.)

Farnsworth \cite{MR3427655} used Koszul-Young flattenings to show
\[
  \rank(\dett_3) \geq \borderrank(\dett_3) \geq 14,
  \qquad
  \rank(\per_3) \geq \borderrank(\per_3) \geq 14,
\]
and $\borderrank(\dett_4) \geq 38$,
along with modestly improved lower bounds for $\borderrank(\dett_d)$ and $\borderrank(\per_d)$ when $d \geq 5$.

Ga\l\k{a}zka \cite{MR3611482} showed that certain lower bounds for border rank are in fact also lower bounds for cactus rank and border cactus rank.
Specifically, Ga\l\k{a}zka shows that lower bounds for border rank arising from Young flattenings
are in fact lower bounds for cactus rank and border cactus rank, if the Young flattenings arise from vector bundles.
For the $d \geq 5$ cases, Farnsworth uses Young flattenings that indeed arise from vector bundles.
It is not immediately clear whether the Young flattenings used by Farnsworth for the $d=3,4$ cases also arise from vector bundles.
If so, then combining Farnsworth's and Ga\l\k{a}zka's results gives $\cactusrank(\per_3) \geq 14$.

In summary, three (at least) proofs of $\rank(\dett_3) \geq 14$ have been given
\cite{MR2628829,MR3390032,MR3427655}.
The chronologically first proof was strictly for Waring rank;
the second proof gave a bound for cactus rank, in addition to Waring rank;
and the third proof gave a bound for border rank, in addition to Waring rank.
We are aware of only one proof of $\rank(\per_3) \geq 14$, in \cite{MR3427655}.
That proof gave a bound for border rank, in addition to Waring rank.
It seems that up to now, the best bound for the cactus rank of the $3 \times 3$ permanent is $\cactusrank(\per_3) \geq 10$ \cite{MR3316987},
unless Farnsworth's and Ga\l\k{a}zka's results may be combined to yield $\cactusrank(\per_3) \geq 14$.

\section{Bound for Waring rank via syzygies}

In order to illustrate the method we will use for determinant and permanent,
we illustrate briefly with a proof of $\rank(xyz) \geq 4$ (which is well-known by other means).
The explicit expression given in the introduction shows $\rank(xyz) \leq 4$.
Suppose that $xyz = \ell_1^3 + \dotsb + \ell_r^3$.
Note that $xyz$ is concise, since each variable can be obtained as a derivative of $xyz$:
$x = \frac{\partial^2}{\partial y \, \partial z}(xyz)$, and so on.
This means that the $\ell_i$ must span $V = \sspan\{x,y,z\}$, so already we have $r \geq 3$.
If $r < 4$, then the $\ell_i$ must give a basis for $V$.
Up to linear change of coordinates, $I = I(\{[\ell_1],[\ell_2],[\ell_3]\})$ is given by $I = (tu,tv,uv)$,
where $t,u,v$ are suitable coordinates.
The minimal graded free resolution of $T/I$ has graded Betti numbers
\[
\begin{array}{r rrr}
  &0&1&2\\
  \text{total:}&1&3&2\\
  \text{0:} & 1 &  . &   . \\
  \text{1:} & . &  3 &   2 \\
\end{array}
\]
Now we compare the apolar ideal.
Here it is more convenient to replace $xyz$ with $x_1 x_2 x_3$, with dual variables $y_1,y_2,y_3$.
We have $(x_1 x_2 x_3)^\perp = (y_1^2, y_2^2, y_3^2)$, whose resolution is Koszul, with graded Betti numbers
\[
\begin{array}{r rrrr}
  &0&1&2&3\\
  \text{total:}&1&3&3&1\\
  \text{0:} & 1 &  . &   . &  . \\
  \text{1:} & . &  3 &   . &  . \\
  \text{2:} & . &  . &   3 &  . \\
  \text{3:} & . &  . &   . &  1 \\
\end{array}
\]
In particular, $\beta_{2,3}(T/I) = 2$, while $\beta_{2,3}(T/(xyz)^\perp) = 0$.

In general, an inclusion $I \subseteq J$ does not imply $\beta_{i,j}(T/I) \leq \beta_{i,j}(T/J)$.
However, we do get an inequality in the lowest-degree strand of the resolution of the larger ideal,
that is, along the first non-trivial row of the Betti table.
For our purposes the following special case is sufficient:
\begin{proposition}
\label{proposition: subideal betti number inequality}
Suppose that $I \subseteq J$ are homogeneous ideals in the polynomial ring $T$ with standard grading, and that $J$ contains no linear forms.
Then $\beta_{i,i+1}(T/I) \leq \beta_{i,i+1}(T/J)$ for all $i$.
\end{proposition}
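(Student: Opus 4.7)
The plan is to derive the inequality by applying the long exact sequence of $\mathrm{Tor}$ to the short exact sequence
\[
  0 \to J/I \to T/I \to T/J \to 0
\]
of graded $T$-modules, and reading off what happens in the single degree $i+1$.

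First I would observe that since $J$ contains no linear forms and $I \subseteq J$, the ideal $I$ also contains no linear forms. Assuming $J \neq T$ (otherwise $I = J$ and the statement is trivial), neither $I$ nor $J$ contains nonzero constants, so both $(J/I)_0$ and $(J/I)_1$ vanish. Hence $J/I$ is a graded $T$-module generated in degrees $\geq 2$.

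The key technical input is the standard fact that if a graded $T$-module $M$ is generated in degrees $\geq d$, then $\mathrm{Tor}_i^T(M,\field)_j = 0$ for $j < d + i$; one sees this by building the minimal graded free resolution of $M$ inductively, noting that each successive term is generated in degrees at least one higher than the previous one. Applied to $M = J/I$ with $d = 2$, this gives
\[
  \mathrm{Tor}_i^T(J/I,\field)_{i+1} = 0 \quad \text{for all } i.
\]
Plugging this into the relevant segment of the long exact sequence in degree $i+1$,
\[
  \mathrm{Tor}_i^T(J/I,\field)_{i+1} \to \mathrm{Tor}_i^T(T/I,\field)_{i+1} \to \mathrm{Tor}_i^T(T/J,\field)_{i+1},
\]
the left term vanishes, so the middle map is injective. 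Since $\beta_{i,i+1}(T/I) = \dim_\field \mathrm{Tor}_i^T(T/I,\field)_{i+1}$ and similarly for $T/J$, the inequality $\beta_{i,i+1}(T/I) \leq \beta_{i,i+1}(T/J)$ follows.

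Honestly, there is no real obstacle here: the whole argument is a two-line exercise in homological algebra once one notices that the hypothesis ``$J$ contains no linear forms'' is precisely what makes the obstruction term $\mathrm{Tor}_i(J/I,\field)_{i+1}$ vanish. The only point requiring any care is the degree bookkeeping for $J/I$, which is why the hypothesis on linear forms is essential — if $J$ were allowed to contain a linear form, $J/I$ could be generated in degree $1$ and the argument would collapse.
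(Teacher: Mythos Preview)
Your argument is correct. The paper does not actually give its own proof of this proposition; it simply cites \cite[Proposition~8.11]{MR2103875} (Eisenbud, \emph{The Geometry of Syzygies}) for a more general statement. The proof you wrote is the standard one and is essentially what appears in that reference: the hypothesis that $J$ has no linear forms forces $J/I$ to live in degrees $\geq 2$, so its minimal free resolution has $i$th term generated in degrees $\geq i+2$, whence $\mathrm{Tor}_i^T(J/I,\field)_{i+1}=0$ and the long exact sequence yields the injection.
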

See for example \cite[Proposition 8.11]{MR2103875} for a more general statement.

Now the inclusion $I \subset (xyz)^\perp$, and the fact that $(xyz)^\perp$ contains no linear form,
means that $\beta_{2,3}(T/I) \leq \beta_{2,3}(T/(xyz)^\perp)$.
This contradiction shows $\rank(xyz) > 3$.

We take a similar approach for the determinant and permanent.
One can compute that $\beta_{5,6}(T/\dett_3^\perp) = 100$ and $\beta_{5,6}(T/\per_3^\perp) = 116$.
Then we show that for suitable ideals $I$ of $13$ points, $\beta_{5,6}(T/I) \geq 140$.
This already proves $\rank(\dett_3), \rank(\per_3) > 13$, and in fact it gives a lower bound for cactus rank.
Finally we analyze $\beta_{5,6}(T/(\dett_3 - \ell^3)^\perp)$, and show that this is still strictly less than $140$.
This proves $\rank(\dett_3 - \ell^3) > 13$, and so therefore $\rank(\dett_3) > 14$.

\subsection{Apolarity of determinant and permanent}

Let $X$ be the matrix $(x_{i,j})$ and $Y$ be the matrix $(y_{i,j})$ of dual variables,
so $\dett_d = \det(X)$ and $\per_d = \per(X)$.
Shafiei determined the apolar ideals of the determinant and permanent, as follows.
\begin{theorem}[\cite{MR3316987}]
For $d \geq 2$, $\dett_d^\perp$ and $\per_d^\perp$ are generated by quadrics.
Specifically, $\dett_d^\perp$ is generated by the following quadrics:
\begin{enumerate}
\item $y_{i,j}^2$ for $1 \leq i,j \leq d$ (squares of entries of $Y$),
\item $y_{i,j_1} y_{i,j_2}$ for $1 \leq i,j_1,j_2 \leq d$ (products of two entries from the same row of $Y$),
\item $y_{i_1,j} y_{i_2,j}$ for $1 \leq i_1,i_2,j \leq d$ (products of two entries from the same column of $Y$), and
\item $y_{i,j} y_{k,l} + y_{i,l} y_{k,j}$ for $1 \leq i,j,k,l \leq d$ (permanents of $2 \times 2$ submatrices of $Y$).
\end{enumerate}
And $\per_d^\perp$ is generated by
\begin{enumerate}
\item $y_{i,j}^2$ for $1 \leq i,j \leq d$,
\item $y_{i,j_1} y_{i,j_2}$ for $1 \leq i,j_1,j_2 \leq d$,
\item $y_{i_1,j} y_{i_2,j}$ for $1 \leq i_1,i_2,j \leq d$, and
\item $y_{i,j} y_{k,l} - y_{i,l} y_{k,j}$ for $1 \leq i,j,k,l \leq d$ (determinants of $2 \times 2$ submatrices of $Y$).
\end{enumerate}
\end{theorem}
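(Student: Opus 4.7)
The plan is a Hilbert function computation. Let $J$ (resp.\ $J'$) denote the ideal generated by the quadrics listed for $\dett_d$ (resp.\ $\per_d$). The strategy is threefold: (i) verify $J \subseteq \dett_d^\perp$ and $J' \subseteq \per_d^\perp$ directly; (ii) compute $\dim(T/\dett_d^\perp)_a$ and $\dim(T/\per_d^\perp)_a$ via the module of derivatives; (iii) establish a matching upper bound on $\dim(T/J)_a$ and $\dim(T/J')_a$. The containment in (i) combined with the dimension estimates in (ii) and (iii) forces $J = \dett_d^\perp$ and $J' = \per_d^\perp$.

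For (i), no monomial of $\dett_d$ or $\per_d$ uses a repeated variable or two entries from the same row or column, so the generators of types (1)--(3) annihilate trivially. For type (4), differentiating $\dett_d$ by $y_{i,j}y_{k,l}$ picks out the signed $(d-2)\times(d-2)$ minor obtained by deleting rows $\{i,k\}$ and columns $\{j,l\}$, while the companion term $y_{i,l}y_{k,j}$ produces the same minor with the opposite sign; so the sum $y_{i,j}y_{k,l} + y_{i,l}y_{k,j}$ kills $\dett_d$. For the permanent the two terms produce the same sub-permanent with the same sign, so the difference kills $\per_d$.

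For (ii), the $a$-th order partial derivatives of $\dett_d$ are, up to sign, the $(d-a)\times(d-a)$ minors of $X$ formed by choosing complementary row and column subsets $(R,C)$; there are $\binom{d}{a}^2$ such minors, and distinct minors involve disjoint sets of monomials, so they are linearly independent in $S_{d-a}$. Hence $\dim(T/\dett_d^\perp)_a = \binom{d}{a}^2$, and the same argument with sub-permanents handles $\per_d^\perp$. For (iii), I would exhibit a spanning set of $(T/J)_a$ indexed by pairs of $a$-subsets of rows and columns. Generators of types (1)--(3) force every surviving monomial to correspond to a partial permutation matrix: distinct rows, distinct columns, one entry per chosen row and column. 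Thus surviving monomials are indexed by a row set $R$, a column set $C$, and a bijection $\sigma : R \to C$. The determinant's type (4) relation shows that transposing two columns of $\sigma$ changes the class modulo $J$ by a sign, and since transpositions generate the symmetric group, all $a!$ bijections $R \to C$ represent a single class up to sign, giving $\dim(T/J)_a \leq \binom{d}{a}^2$. The permanent case is identical, with signs replaced by equalities.

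The main obstacle in step (iii) is verifying that the reductions from type (4) are internally consistent — i.e., that the sign (for the determinant) attached to each class is well-defined. This reduces to the fact that the sign map on the symmetric group is a homomorphism, so the ambiguity in reducing a monomial via different sequences of transpositions cancels out. Once the upper bound in (iii) is in place, the chain
\[
  \binom{d}{a}^2 \leq \dim(T/\dett_d^\perp)_a \leq \dim(T/J)_a \leq \binom{d}{a}^2 ,
\]
in which the middle inequality uses $J \subseteq \dett_d^\perp$, collapses to equalities, so $J = \dett_d^\perp$. The same squeeze applies for the permanent.
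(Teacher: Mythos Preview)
Your proof is correct and matches the approach the paper describes (attributed to Shafiei): verify containment $J \subseteq \dett_d^\perp$ and then equate Hilbert functions. Your reduction argument for bounding $\dim(T/J)_a$ --- normalizing partial-permutation monomials via the binomial generators --- is also essentially the alternative argument the paper sketches immediately after the theorem statement.
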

The ideals described in the theorem are easily shown to be contained in the apolar ideals of $\dett_d$ and $\per_d$,
and then Shafiei in \cite{MR3316987} shows that they have the same Hilbert function.

Alternatively, if $\Theta$ is a homogeneous form in $\dett_d^\perp$ or $\per_d^\perp$,
one may use the monomial generators to remove all terms of $\Theta$ that involve two entries from the same row or column of $Y$,
then use the binomial generators to eliminate all terms that involve any southwest-northeast entries,
i.e., any terms with factors $y_{i,j} y_{k,l}$ with $i > k$ and $j < l$.
So we can assume $\Theta$ consists purely of northwest-southeast terms, i.e., terms of the form
$y_{i_1,j_1} \dotsm y_{i_t,j_t}$ with $i_1 < \dotsb < i_t$ and $j_1 < \dotsb < j_t$.
But such forms can never annihilate $\dett_d$ or $\per_d$, since the differentiations
result in combinations of determinants (respectively, permanents) of distinct submatrices,
which are linearly independent.

We are especially interested in $\dett_3$ and $\per_3$,
and not only the generators, but also the syzygies of their apolar ideals.
The graded Betti numbers of $T/\dett_3^\perp$ are as follows:
\[
\begin{array}{rrrrrrrrrrr}
  &0&1&2&3&4&5&6&7&8&9\\
  \text{total:}&1&36&160&315&388&388&315&160&36&1\\
  \text{0:}&1&\text{.}&\text{.}&\text{.}&\text{.}&\text{.}&\text{.}&\text{.}&\text{.}&\text{.}\\
  \text{1:}&\text{.}&36&160&315&288&100&\text{.}&\text{.}&\text{.}&\text{.}\\
  \text{2:}&\text{.}&\text{.}&\text{.}&\text{.}&100&288&315&160&36&\text{.}\\
  \text{3:}&\text{.}&\text{.}&\text{.}&\text{.}&\text{.}&\text{.}&\text{.}&\text{.}&\text{.}&1\\
\end{array}
\]
The graded Betti numbers of $T/\per_3^\perp$ are as follows:
\[
\begin{array}{rrrrrrrrrrr}
  &0&1&2&3&4&5&6&7&8&9\\
  \text{total:}&1&36&160&315&404&404&315&160&36&1\\
  \text{0:}&1&\text{.}&\text{.}&\text{.}&\text{.}&\text{.}&\text{.}&\text{.}&\text{.}&\text{.}\\
  \text{1:}&\text{.}&36&160&315&288&116&\text{.}&\text{.}&\text{.}&\text{.}\\
  \text{2:}&\text{.}&\text{.}&\text{.}&\text{.}&116&288&315&160&36&\text{.}\\
  \text{3:}&\text{.}&\text{.}&\text{.}&\text{.}&\text{.}&\text{.}&\text{.}&\text{.}&\text{.}&1\\
\end{array}
\]
These are easy to compute in Macaulay2 \cite{M2}.
For our purposes, the important point is that $\beta_{5,6}(T/\dett_3^\perp) = 100$ and $\beta_{5,6}(T/\per_3^\perp) = 116$.

\begin{remark}
The syzygies of $\dett_d^\perp$ and $\per_d^\perp$
have been studied by Alper and Rowlands \cite{Alper:2017aa},
who give combinatorial interpretations of some syzygies.
\end{remark}

\subsection{Ideals of points}

\begin{proposition}
\label{prop: points betti bound 140}
Let $I \subset S = \field[y_1,\dotsc,y_9]$ be a one-dimensional saturated homogeneous ideal of degree $13$ containing no linear form.
Then $\beta_{5,6}(T/I) \geq 140$.
\end{proposition}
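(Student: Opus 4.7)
My plan is to classify the possible Hilbert functions of $T/I$, then for each use the Hilbert-series alternating-sum identity together with non-negativity of the Betti numbers to lower-bound $\beta_{5,6}(T/I)$.

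Since $I$ is saturated of degree $13$ with no linear forms, the ring $T/I$ is $1$-dimensional Cohen--Macaulay. Its Hilbert function $h_t=\dim(T/I)_t$ is thus non-decreasing, starts with $h_0=1$, $h_1=9$, and is eventually constant equal to $13$; the first difference $\Delta h_t=h_t-h_{t-1}$ is the Hilbert function of the Artinian quotient of $T/I$ by a general linear form, hence an O-sequence in $8$ variables. Applying Macaulay's growth bound leaves only five admissible Hilbert functions,
\[
  (1,9,13,\ldots),\ (1,9,12,13,\ldots),\ (1,9,11,13,\ldots),\ (1,9,11,12,13,\ldots),\ (1,9,10,11,12,13,\ldots),
\]
with Castelnuovo--Mumford regularity of $T/I$ at most $2,3,3,4,5$ respectively. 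Applying Proposition~\ref{proposition: subideal betti number inequality} to the sub-ideal $(I_2)\subseteq I$ additionally lets us assume $\beta_{1,j}(T/I)=0$ for $j\ne 2$, simplifying the analysis in each column below.

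For each admissible Hilbert function, setting $g(t)=H_{T/I}(t)(1-t)$, the identity
\[
  \sum_i(-1)^i \beta_{i,j}(T/I) \;=\; [t^j]\,g(t)(1-t)^8
\]
constrains the Betti numbers in column $j$. In the generic case $(1,9,13,\ldots)$, regularity at most $2$ forces only $\beta_{4,6}$ and $\beta_{5,6}$ to be nonzero in column $6$; with $g(t)=1+8t+4t^2$ one computes $[t^6]\,g(t)(1-t)^8=-140$, so the identity reads $\beta_{5,6}(T/I)-\beta_{4,6}(T/I)=140$, and non-negativity of $\beta_{4,6}(T/I)$ immediately yields $\beta_{5,6}(T/I)\geq 140$. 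For the other four Hilbert functions the regularity is larger and more Betti numbers enter the column-$6$ alternating sum; these cases I would handle by combining the column-$6$ identity with the analogous identities in neighboring columns (for instance $j=4,5,7$) together with non-negativity of all Betti numbers, or equivalently through a Boij--S\"oderberg decomposition of the Betti diagram into pure diagrams, each of whose contribution to $\beta_{5,6}$ is calculable from the Herzog--K\"uhl formula.

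The main obstacle will be the four non-generic Hilbert-function cases, where the column-$6$ identity alone involves three or more Betti numbers with mixed signs, so that no single identity isolates $\beta_{5,6}$. I expect these are resolved either by a careful combination of several column identities with the non-negativity constraints, or by an explicit Boij--S\"oderberg decomposition showing that every compatible pure-diagram decomposition contributes at least $140$ to $\beta_{5,6}$. The bound is attained with equality only in the generic Hilbert function case.
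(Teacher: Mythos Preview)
Your Hilbert-function classification is correct and matches the paper's, and your treatment of the generic case $(1,8,4)$ is complete and in fact cleaner than the paper's: you use only the alternating-sum identity in column~$6$ together with regularity~$2$, whereas the paper invokes Peeva's consecutive-cancellation theorem and an explicit Macaulay2 computation of the lex-segment Betti table. So in that case your approach is genuinely more elementary.

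The gap is exactly where you flag it: the four non-generic $h$-vectors. You do not actually carry out the argument there, and it is not clear that your proposed tools suffice. Once the regularity exceeds~$2$, the column-$6$ identity involves $\beta_{3,6}$ (and possibly $\beta_{2,6}$, $\beta_{1,6}$) with the wrong sign, and nonnegativity alone does not bound these from above. Combining several column identities with nonnegativity amounts to working in the cone of ``virtual'' Betti tables with the given Hilbert function; this cone strictly contains the Boij--S\"oderberg cone, so there is no reason to expect that linear combinations of the identities alone will close the gap. A Boij--S\"oderberg decomposition might succeed, but you have not exhibited one, and checking all chains of pure diagrams compatible with each of the four $h$-vectors is itself a substantial computation that you have not done. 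As it stands, four of the five cases are unproven.

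The step where you ``assume $\beta_{1,j}(T/I)=0$ for $j\neq 2$'' by passing to $(I_2)$ is also not sound as written. Proposition~\ref{proposition: subideal betti number inequality} does give $\beta_{5,6}(T/(I_2))\leq\beta_{5,6}(T/I)$, so it would suffice to bound $\beta_{5,6}(T/(I_2))$ from below; but $(I_2)$ need not be saturated, one-dimensional, or of degree~$13$, so your Hilbert-function case analysis no longer applies to it. You should drop this reduction; it is not needed in the generic case and does not help in the others.

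For comparison, the paper handles all five $h$-vectors uniformly via Peeva's theorem: the Betti numbers of $T/I$ arise by consecutive cancellations from those of the lex-segment ideal $L$ with the same $h$-vector, giving $\beta_{5,6}(T/I)\geq\beta_{5,6}(T/L)-\beta_{4,6}(T/L)$ (there is no $\beta_{6,6}$ to subtract since $L$ contains no linear form). The five lex Betti tables are then computed explicitly in Macaulay2, and one reads off that the right-hand side is $140$, $245$, $350$, $315$, $385$ respectively. This is less conceptual than your generic-case argument but dispatches the non-generic cases without further work.
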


\begin{proof}
Peeva \cite{MR2084070} showed that the Betti numbers of $T/I$
are obtained by consecutive cancellations from those of $T/L$ where $L$ is the lex-segment ideal with the same $h$-vector as $I$.
So $\beta_{5,6}(T/I) \geq \beta_{5,6}(T/L) - \beta_{4,6}(T/L)$.
(A priori subtracting $\beta_{6,6}(T/L)$ would also be possible,
but the condition that $I$ contains no linear form means that neither does $L$, so $\beta_{6,6}(T/L) = 0$.)

The $h$-vector of $T/I$ must start with $(1,8,\dotsc)$ since $T/I$ contains no linear form.
The entries of the $h$-vector sum to $13$, the degree of $T/I$.
After the $8$, the entries of the $h$-vector are nonincreasing,
by the Macaulay bounds, see \cite[Theorem 4.2.10]{MR1251956}.
Explicitly, if the $h$-vector's $d$th entry, counted from $0$, is less than or equal to $d$,
then the Macaulay bounds imply that from that point on the $h$-vector is nonincreasing.
So if the $h$-vector of $T/I$ starts with $(1,8,2,\dotsc)$ or $(1,8,1,\dotsc)$ then the remainder is nonincreasing.
If it starts with $(1,8,3,\dotsc)$ then because the entries add up to $13$ the vector must be $(1,8,3,1)$.
Hence the $h$-vector of $T/I$ must be one of
$(1,8,4)$, $(1,8,3,1)$, $(1,8,2,2)$, $(1,8,2,1,1)$ or $(1,8,1,1,1,1)$.

The Betti numbers for the lex-segment ideals with these $h$-vectors are given as follows.
For $h$-vector $(1,8,4)$:
\[
  \begin{array}{rrrrrrrrrr}
      &0&1&2&3&4&5&6&7&8\\
      \text{}&1&37&175&399&539&455&237&70&9\\
      \text{0:}&1&\text{.}&\text{.}&\text{.}&\text{.}&\text{.}&\text{.}&\text{.}&\text{.}\\
      \text{1:}&\text{.}&32&141&300&379&300&147&41&5\\
      \text{2:}&\text{.}&5&34&99&160&155&90&29&4\\
  \end{array}
\]
For $h$-vector $(1,8,3,1)$:
\[
  \begin{array}{rrrrrrrrrr}
         &0&1&2&3&4&5&6&7&8\\
         \text{}&1&37&175&399&539&455&237&70&9\\
         \text{0:}&1&\text{.}&\text{.}&\text{.}&\text{.}&\text{.}&\text{.}&\text{.}&\text{.}\\
         \text{1:}&\text{.}&33&148&321&414&335&168&48&6\\
         \text{2:}&\text{.}&3&20&57&90&85&48&15&2\\
         \text{3:}&\text{.}&1&7&21&35&35&21&7&1\\
  \end{array}
\]
For $h$-vector $(1,8,2,2)$:
\[
    \begin{array}{rrrrrrrrrr}
      &0&1&2&3&4&5&6&7&8\\
      \text{}&1&36&168&378&504&420&216&63&8\\
      \text{0:}&1&\text{.}&\text{.}&\text{.}&\text{.}&\text{.}&\text{.}&\text{.}&\text{.}\\
      \text{1:}&\text{.}&34&154&336&434&350&174&49&6\\
      \text{2:}&\text{.}&\text{.}&\text{.}&\text{.}&\text{.}&\text{.}&\text{.}&\text{.}&\text{.}\\
      \text{3:}&\text{.}&2&14&42&70&70&42&14&2
    \end{array}
\]
For $h$-vector $(1,8,2,1,1)$:
\[
    \begin{array}{rrrrrrrrrr}
      &0&1&2&3&4&5&6&7&8\\
      \text{}&1&36&168&378&504&420&216&63&8\\
      \text{0:}&1&\text{.}&\text{.}&\text{.}&\text{.}&\text{.}&\text{.}&\text{.}&\text{.}\\
      \text{1:}&\text{.}&34&154&336&434&350&174&49&6\\
      \text{2:}&\text{.}&1&7&21&35&35&21&7&1\\
      \text{3:}&\text{.}&\text{.}&\text{.}&\text{.}&\text{.}&\text{.}&\text{.}&\text{.}&\text{.}\\
      \text{4:}&\text{.}&1&7&21&35&35&21&7&1\\
    \end{array}
\]
For $h$-vector $(1,8,1,1,1,1)$:
\[
    \begin{array}{rrrrrrrrrr}
      &0&1&2&3&4&5&6&7&8\\
      \text{}&1&36&168&378&504&420&216&63&8\\
      \text{0:}&1&\text{.}&\text{.}&\text{.}&\text{.}&\text{.}&\text{.}&\text{.}&\text{.}\\
      \text{1:}&\text{.}&35&161&357&469&385&195&56&7\\
      \text{2:}&\text{.}&\text{.}&\text{.}&\text{.}&\text{.}&\text{.}&\text{.}&\text{.}&\text{.}\\
      \text{3:}&\text{.}&\text{.}&\text{.}&\text{.}&\text{.}&\text{.}&\text{.}&\text{.}&\text{.}\\
      \text{4:}&\text{.}&\text{.}&\text{.}&\text{.}&\text{.}&\text{.}&\text{.}&\text{.}&\text{.}\\
      \text{5:}&\text{.}&1&7&21&35&35&21&7&1\\
    \end{array}
\]
These can be produced in Macaulay2 with code such as the following.
\begin{lstlisting}
lexSeg = (R,d,i) -> ideal ((basis(d,R))_{0..(numcols basis(d,R)-i-1)})
lexIdeal = L -> (
  R := QQ[q_1..q_(L_1)]; trim sum for i to #L-1 list lexSeg(R,i,L_i));

betti res lexIdeal(1,8,4,0)
betti res lexIdeal(1,8,3,1,0)
\end{lstlisting}
and so on.
(To be precise, this code produces the Artinian reductions of the desired one-dimensional ideals.
But they have the same graded Betti numbers.)

This shows that in all cases, after taking into account the
possible cancellations, the Betti number $\beta_{5,6}(T/I) \geq 140$.
\end{proof}

In the following theorem,
the only new result is that $\cactusrank(\per_3) \geq 14$, improving the previously known $\cactusrank(\per_3) \geq 10$ \cite{MR3316987}.
\begin{theorem}
\label{theorem: lower bound 14}
$\rank(\dett_3) \geq \cactusrank(\dett_3) \geq 14$ and $\rank(\per_3) \geq \cactusrank(\per_3) \geq 14$.
\end{theorem}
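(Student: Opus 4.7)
The plan is to combine the two propositions with the graded Betti number computations: Proposition~\ref{proposition: subideal betti number inequality} says that for any saturated ideal $I \subseteq F^\perp$ (with $F^\perp$ containing no linear form), we have $\beta_{5,6}(T/I) \leq \beta_{5,6}(T/F^\perp)$; and Proposition~\ref{prop: points betti bound 140} says that any one-dimensional saturated ideal of degree~$13$ containing no linear form has $\beta_{5,6}(T/I) \geq 140$. Since $100 < 140$ and $116 < 140$, no such $I$ of degree~$13$ can be contained in $\dett_3^\perp$ or $\per_3^\perp$, which should give the desired lower bound.

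More concretely, suppose for contradiction that $F \in \{\dett_3,\per_3\}$ has $\cactusrank(F) \leq 13$. By definition, there is a zero-dimensional subscheme $Z \subset \bbP V$ apolar to $F$ with $\deg Z \leq 13$. Both $\dett_3$ and $\per_3$ are concise (every variable $x_{i,j}$ is a second partial derivative), so $F^\perp$ contains no linear form. If $\deg Z < 13$, I would enlarge $Z$ by adjoining $13 - \deg Z$ general reduced points of $\bbP V$, obtaining $Z' \supseteq Z$ of degree exactly $13$; then the saturated ideal $I(Z') \subseteq I(Z) \subseteq F^\perp$ is still a one-dimensional saturated homogeneous ideal without linear forms. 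Applying Proposition~\ref{proposition: subideal betti number inequality} to $I(Z') \subseteq F^\perp$ gives
\[
  \beta_{5,6}(T/I(Z')) \;\leq\; \beta_{5,6}(T/F^\perp) \;\leq\; 116,
\]
while Proposition~\ref{prop: points betti bound 140} gives $\beta_{5,6}(T/I(Z')) \geq 140$, a contradiction. Hence $\cactusrank(F) \geq 14$, and since $\rank(F) \geq \cactusrank(F)$, also $\rank(F) \geq 14$.

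The main technical point is really just the enlargement step: one needs to know that apolarity is preserved when passing to a \emph{larger} subscheme (equivalently, a \emph{smaller} ideal), which is immediate from the definition $I(Z') \subseteq I(Z) \subseteq F^\perp$, and that the enlargement does not introduce any linear forms into the saturated ideal, which follows because $F^\perp$ itself has no linear forms. Beyond that, the argument is a bookkeeping exercise: both Betti table computations of the apolar ideals and the lex-segment analysis have already been carried out. I do not anticipate a substantive obstacle; the conceptual content is entirely captured by the two propositions, and the conciseness of $\dett_3$ and $\per_3$ is a trivial check.
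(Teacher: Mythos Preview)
Your proposal is correct and follows essentially the same approach as the paper: combine conciseness (no linear forms in $F^\perp$), Proposition~\ref{proposition: subideal betti number inequality}, and Proposition~\ref{prop: points betti bound 140} to derive the contradiction $140 \leq \beta_{5,6}(T/I) \leq 116$. The only difference is that you make explicit the enlargement step (adjoining extra points to reach degree exactly~$13$), which the paper leaves implicit; your observation that $I(Z') \subseteq I(Z) \subseteq F^\perp$ handles this cleanly.
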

\begin{proof}
Let $I$ be any one-dimensional saturated homogeneous ideal which is apolar to $\dett_3$ or $\per_3$.
Since $\dett_3$ and $\per_3$ are concise, $\dett_3^\perp$ and $\per_3^\perp$ contain no linear forms, hence neither does $I$.
If $I$ has degree $13$ then $\beta_{5,6}(T/I) \geq 140$ by Proposition~\ref{prop: points betti bound 140}.
However, by
Proposition~\ref{proposition: subideal betti number inequality},
$\beta_{5,6}(T/I) \leq \beta_{5,6}(T/\dett_3^\perp) = 100$
or $\beta_{5,6}(T/I) \leq \beta_{5,6}(T/\per_3^\perp) = 116$, a contradiction.
\end{proof}

\begin{remark}
The Betti table of the minimal graded free resolution of the coordinate ring of a general set of $13$ points in $\bbP^8$ is as follows:
\[
\begin{array}{r rrrrr rrrr}
      &0&1&2&3&4&5&6&7&8\\
      \text{}&1&36&168&378&504&420&216&63&8\\
  \text{0:} & 1 &  . &   . &   . &   . &   . &   . &   . &  . \\
  \text{1:} & . & 32 & 136 & 266 & 280 & 140 &   2 &   . &  . \\
  \text{2:} & . &  . &   . &   . &   . &  10 &  49 &  24 &  4
\end{array}
\]
See \cite{MR1688433} or \cite[\textsection 5]{MR1749894}.
This implies, by semicontinuity of graded Betti numbers, that the graded Betti number $\beta_{5,6}$ is at least $140$
for an arbitrary set of $13$ points in $\bbP^8$, not lying on a hyperplane.
So, why not just give this argument; why the longer argument given above?

In fact, this shorter argument implies the inequality
for any closed, nondegenerate zero-dimensional subscheme of $\bbP^8$
which can be deformed into a set of $13$ distinct points---a \emph{smoothable} scheme.
This would be sufficient if we only cared about ideals of reduced schemes, corresponding to Waring rank,
or smoothable schemes, corresponding to so-called smoothable rank.

But there exist non-smoothable schemes, see for example
\cite{MR515043,MR3689951}.
(These give non-smoothable schemes of degree less than $13$, in spaces of lower dimension than $\bbP^8$.
But the degree can be increased by adding disjoint points, and the embedding dimension can be increased by re-embedding.)
So this shorter argument is not sufficient to deal directly with arbitrary schemes.

On the other hand, it is known that every scheme of minimal degree that spans a given form is Gorenstein
(if $R$ is a zero-dimensional scheme such that $[F]$ lies in the linear span of $R$ and $R$ is not Gorenstein,
then there is a subscheme $R' \subset R$ of strictly lower degree and whose linear span still includes $[F]$, see \cite[Lemma 2.3]{MR3121848}).
Thus we can restrict attention to Gorenstein schemes.
And it is known that every Gorenstein scheme of degree at most $13$ in $\bbP^8$ is smoothable \cite{MR3404648}
(under some mild assumptions on the field).
Combining these results gives an alternative proof of Theorem~\ref{theorem: lower bound 14}.
\end{remark}

\begin{remark}
A similar computation shows that a one-dimensional ideal $I$ of degree $14$ in $\mathbb{P}^8$ containing no linear form has
$\beta_{5,6}(T/I) \geq 70$, and $70$ does occur (in particular for the ideal of a general set of $14$ reduced points).
This lower bound is far too weak to use in our argument to prove $\rank(T/\dett_3) > 14$.
Instead we will use the bounds for ideals of degree $13$.
\end{remark}

\subsection{Conciseness after subtracting}

\begin{lemma}
Let $F$ be any concise, indecomposable form and let $\ell$ be any linear form.
Then $F - \ell^d$ is concise.
\end{lemma}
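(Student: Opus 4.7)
The plan is to argue by contrapositive: assuming $F - \ell^d$ is not concise, I would deduce that $F$ is either not concise or decomposable, contradicting the hypotheses. If $F - \ell^d$ is not concise, then by the equivalence of the conciseness conditions it lies in $\Sym^d(W)$ for some proper subspace $W \subsetneq V$, which I may take to be a hyperplane. Choose a basis $z_1,\dotsc,z_{n-1}$ of $W$ and extend to a basis $z_1,\dotsc,z_n$ of $V$; then $F - \ell^d = H(z_1,\dotsc,z_{n-1})$ for some form $H$ of degree $d$.

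Next, I would use conciseness of $F$ to pin down the position of $\ell$. If $\ell$ were in $W$, then both $H$ and $\ell^d$ would lie in $\Sym^d(W)$, forcing $F \in \Sym^d(W)$ and violating conciseness of $F$. Therefore $\ell \notin W$, so $\{z_1,\dotsc,z_{n-1},\ell\}$ is a basis of $V$ realizing the decomposition $V = W \oplus \sspan\{\ell\}$.

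With this new basis in hand, the identity $F = H(z_1,\dotsc,z_{n-1}) + \ell^d$ is, by inspection, a direct sum decomposition in the sense defined earlier, provided that $H \neq 0$ (and $\ell \neq 0$, else there is nothing to prove). This contradicts indecomposability of $F$.

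The only thing that could go wrong is the degenerate case $H = 0$, where $F = \ell^d$. But then $F$ depends essentially only on the single variable $\ell$, which contradicts conciseness of $F$ as soon as $n \geq 2$; and $n=1$ is a trivial special case in which conciseness, indecomposability, and the conclusion all degenerate. So there is no real obstacle: the argument is essentially an unpacking of the definitions, with the $H=0$ edge case being the only mildly delicate point to note.
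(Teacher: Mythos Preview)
Your argument is correct and follows essentially the same route as the paper's proof: assume $F-\ell^d$ is not concise, then split into cases according to whether $\ell$ lies in the span of the essential variables of $F-\ell^d$ (your $W$), obtaining respectively a contradiction to conciseness or to indecomposability of $F$. Your treatment is in fact slightly more careful than the paper's, which does not explicitly mention the degenerate case $F-\ell^d=0$.
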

\begin{proof}
If $F - \ell^d$ is not concise, then either $\ell$ is an essential variable of $F-\ell^d$,
in which case $F = \ell^d + (F-\ell^d)$ is not concise,
or else $\ell$ is independent of the essential variables of $F-\ell^d$,
in which case $F = \ell^d + (F-\ell^d)$ is a direct sum decomposition of $F$.
\end{proof}

It is shown in \cite{MR3426613}
that $\dett_d$ and $\per_d$ are indecomposable for $d \geq 3$.
And it is easy to check that $\dett_d$ and $\per_d$ are concise.

\begin{corollary}
\label{corollary: concise}
For any linear form $\ell$, $\dett_d - \ell^d$ and $\per_d - \ell^d$ are concise.
\end{corollary}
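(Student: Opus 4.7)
The plan is to reduce the statement to the preceding lemma, which asserts that $F - \ell^d$ is concise whenever $F$ is concise and indecomposable. So it suffices to verify both hypotheses for $F = \dett_d$ and $F = \per_d$.

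For conciseness, I would exhibit each variable $x_{i,j}$ as an element of $\Derivs(F)_1$. Fix indices $i,j$ and any bijection $\sigma \colon \{1,\dotsc,d\} \setminus \{i\} \to \{1,\dotsc,d\} \setminus \{j\}$. Applying the differential operator $\prod_{k \neq i} y_{k,\sigma(k)}$ to $\dett_d$ kills every summand of the determinant that does not contain all of the $x_{k,\sigma(k)}$ (for $k \neq i$) as factors; the only monomial in $\dett_d$ with this property is the one corresponding to extending $\sigma$ by $i \mapsto j$, so the result is $\pm x_{i,j}$. The same computation, dropping signs, works for $\per_d$. Hence both forms are concise.

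For indecomposability, I would invoke \cite{MR3426613}, already cited in the background section, where it is shown that $\dett_d$ and $\per_d$ are indecomposable for all $d \geq 3$. With both hypotheses verified, the preceding lemma applies directly to yield the corollary for each linear form $\ell$.

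There is no substantive obstacle here: conciseness is a one-line differentiation, and indecomposability is quoted as a black box, so all of the content is already packaged in the preceding lemma and its cited inputs. The only point worth flagging is that the argument presumes $d \geq 3$ (as does the indecomposability result); the cases $d \leq 2$ are either trivial or not needed for the applications in the paper.
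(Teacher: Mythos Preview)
Your proposal is correct and follows essentially the same approach as the paper: reduce to the preceding lemma by checking that $\dett_d$ and $\per_d$ are concise (the paper just says ``easy to check,'' whereas you spell out the differentiation) and indecomposable (both you and the paper cite \cite{MR3426613}). Your remark about the implicit hypothesis $d \geq 3$ is apt and matches the paper's own restriction.
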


\subsection{Apolarity after subtracting}

Now we consider the apolarity of $\dett_3 - \ell^3$ for a linear form $\ell$.
Our goal is to show $\beta_{5,6}(T/(\dett_3 - \ell^3)^\perp) < 140$ for every linear form $\ell$.

\begin{proposition}
Fix $d > 0$.
Assume that $\field$ is closed under taking $d$th roots.
Let $X$ be the generic $d \times d$ matrix $X = (x_{i,j})$, $\dett_d = \det X$,
$S = \field[X] = \field[x_{1,1},\dotsc,x_{d,d}]$.
Let $\ell \in S$ be an arbitrary nonzero linear form.
There is a linear change of coordinates leaving $\dett_d$ invariant and taking $\ell^d$ to
$\lambda (x_{1,1} + \dotsb + x_{k,k})^d$ for some $1 \leq k \leq d$ and some scalar $\lambda\neq0$.
In addition, if $1 \leq k \leq d-1$, then we can take $\lambda = 1$.
\end{proposition}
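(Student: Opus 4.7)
The plan is to exploit the fact that $\dett_d$ is invariant under the action $X \mapsto AXB$ of pairs $(A,B) \in \GL_d(\field) \times \GL_d(\field)$ with $\det(A)\det(B) = 1$. Writing $\ell(X) = \sum_{i,j} L_{ij} x_{i,j}$ with coefficient matrix $L$, a short computation shows that under this action $\ell$ is sent to the linear form whose coefficient matrix is $A^T L B^T$. Setting $P = A^T$ and $Q = B^T$, the problem reduces to finding $P, Q \in \GL_d$ with $\det(P)\det(Q) = 1$ and $PLQ = \mu E_k$, where $E_k = \mathrm{diag}(1,\ldots,1,0,\ldots,0)$ has $k = \rank(L)$ ones; then $\ell^d$ is transformed into $\mu^d(x_{1,1}+\cdots+x_{k,k})^d$, so $\lambda = \mu^d$.

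\textbf{Normal form and rescaling.} First I would invoke the rank normal form from elementary linear algebra: there exist $P_0, Q_0 \in \GL_d$ with $P_0 L Q_0 = E_k$. Set $\rho = \det(P_0)\det(Q_0)$, which need not equal $1$. I then rescale by diagonal matrices: let $D = \mathrm{diag}(d_1,\ldots,d_d)$ and $D' = \mathrm{diag}(d'_1,\ldots,d'_d)$ have nonzero entries, and put $P = D P_0$, $Q = Q_0 D'$. Because $E_k$ is diagonal, $PLQ = DE_kD' = \mathrm{diag}(d_1 d'_1, \ldots, d_k d'_k, 0, \ldots, 0)$, and the constraint $\det(P)\det(Q) = 1$ becomes $\prod_{i=1}^d (d_i d'_i) = \rho^{-1}$. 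To achieve $\mu E_k$ we require $d_i d'_i = \mu$ for $i \leq k$, leaving the single remaining condition $\mu^k \prod_{i>k}(d_i d'_i) = \rho^{-1}$.

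\textbf{Case split.} When $k < d$, the products $d_i d'_i$ for $i > k$ are otherwise free, so I can take $\mu = 1$ (hence $\lambda = 1$) and satisfy the remaining equation by, for example, setting $d_{k+1} d'_{k+1} = \rho^{-1}$ and $d_i d'_i = 1$ for $i > k+1$. When $k = d$ no such slack remains and we are forced to take $\mu^d = \rho^{-1}$; the hypothesis that $\field$ is closed under $d$-th roots is exactly what guarantees such $\mu$ exists, and then $\lambda = \mu^d = \rho^{-1}$ is nonzero since $L$ has full rank. Since $\ell \neq 0$, we have $k \geq 1$ in either case.

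\textbf{Anticipated obstacle.} No step involves deep geometry or heavy computation; the argument amounts to tracking how the single determinant-one constraint interacts with diagonal scaling. The mild subtlety is recognizing that this constraint consumes exactly one scaling degree of freedom, which is always available when $k < d$ but forces a specific value of $\mu^d$ when $k = d$---explaining why the stronger conclusion $\lambda = 1$ is claimed only in the former case, and why the hypothesis on $d$-th roots is invoked only to handle the latter.
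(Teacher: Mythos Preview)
Your proposal is correct and follows essentially the same approach as the paper: both identify the coefficient matrix of $\ell$, use determinant-preserving row and column operations to put it in rank normal form, and then rescale diagonally, absorbing the leftover determinant factor into the zero rows/columns when $k<d$ and invoking $d$th roots when $k=d$. The only cosmetic difference is that the paper works with $\SL_d \times \SL_d$ and describes the rescalings as explicit row operations (pushing factors into the $d$th row), whereas you work with the slightly larger group $\{(A,B):\det A\det B=1\}$ and phrase the correction as solving a single scalar constraint; the resulting orbits and argument are the same.
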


\begin{proof}
Let $s_1,s_2 \in \SL_d$.
Our linear change of coordinates in $S$ will be the substitution of $X$ with the matrix product $s_1 X s_2$.
We have $\dett_d = \det(X) = \det(s_1 X s_2)$, which is to say that for any $s_1$ and $s_2$, this coordinate change leaves $\dett_d$ invariant.

Write $\ell = \sum_{i,j} a_{i,j} x_{i,j}$.
Let $A = (a_{i,j})$ be the $d \times d$ matrix of coefficients of $\ell$.
Then $\ell = \tr(A X^t)$.
The coordinate change takes $\ell$ to
\[
  \tr(A (s_1 X s_2)^t) = \tr(A s_2^t X^t s_1^t) = \tr(s_1^t A s_2^t X^t) = \tr(A' X^t),
\]
where $A' = s_1^t A s_2^t$.
We can choose row operations and column operations to make $A$ diagonal,
and swap rows and columns so that the first $k = \rank(A)$ diagonal entries of $A$ are nonzero and the rest are zero.
Multiply the $i$th row of $A$ by $a_{i,i}^{-1}$ and the $d$th row of $A$ by $a_{i,i}$, for $i$ from $1$ to $k$ or $d-1$, whichever is less.
If $k < d$, then we have reached $A'$ with diagonal entries consisting of $k$ ones followed by $d-k$ zeros, as desired.

If $k = d$, then at this point our matrix has diagonal entries consisting of $d-1$ entries of $1$ followed by $\det(A)$, which is nonzero.
Multiply each of the first $d-1$ rows by a $d$th root of $\det(A)$ and the last row by the reciprocal
to ensure that all the diagonal entries of $A'$ are equal,
i.e., $A'$ is a scalar multiple of the identity, namely $A' = cI$, $c = \det(A)^{1/d}$.
Then $\ell^d = (c x_{1,1} + \dotsb + c x_{d,d})^d = \lambda (x_{1,1} + \dotsb + x_{d,d})^d$, $\lambda = c^d = \det(A)$.
\end{proof}

This means that when we consider $\dett_3 - \ell^d$, we can reduce to the three following cases:
\begin{enumerate}
  \item $\dett_3 - x_{1,1}^3$,
  \item $\dett_3 - (x_{1,1} + x_{2,2})^3$,
  \item $\dett_3 - \lambda (x_{1,1} + x_{2,2} + x_{3,3})^3$ for a nonzero scalar $\lambda$.
\end{enumerate}

\begin{proposition}
$\beta_{5,6}(T/(\dett_3 - x_{1,1}^3)^\perp) = \beta_{5,6}(T/(\dett_3 - (x_{1,1} + x_{2,2})^3)^\perp) = 100$.
\end{proposition}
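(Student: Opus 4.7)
The plan is to present each apolar ideal $(\dett_3 - \ell^3)^\perp$ via explicit quadratic generators — obtained by perturbing Shafiei's description of $\dett_3^\perp$ — and then to compute the minimal free resolution (most efficiently in Macaulay2) to read off $\beta_{5,6}$.

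Setting $F = \dett_3 - \ell^3$, the conciseness of $F$ (Corollary~\ref{corollary: concise}) together with Gorenstein symmetry forces the Hilbert function of $T/F^\perp$ to be $(1,9,9,1)$, identical to that of $T/\dett_3^\perp$; in particular $\dim F^\perp_2 = 36$. Moreover, because $\ell^3$ has degree $3$, every operator in $T_{\geq 4}$ automatically annihilates $\ell^3$, so $F^\perp$ and $\dett_3^\perp$ agree in every degree outside $\{2,3\}$; the perturbation affects only the quadratic and cubic pieces of the two ideals.

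Next I would produce explicit quadratic generators of $F^\perp$. A quadric $q \in T_2$ lies in $F^\perp$ iff $q \aa \dett_3 = q \aa \ell^3$, and the right side is a scalar multiple of $\ell$ whose coefficient depends only on the coefficients of $y_{i,i}^2$ and $y_{i,i} y_{j,j}$ (with $i\neq j$) in $q$. For $\ell = x_{1,1}$, only the Shafiei generator $y_{1,1}^2$ fails, and may be replaced by $y_{1,1}^2 + 6\, y_{2,2} y_{3,3}$. For $\ell = x_{1,1}+x_{2,2}$, exactly three Shafiei generators fail — $y_{1,1}^2$, $y_{2,2}^2$, and the permanent $y_{1,1} y_{2,2} + y_{1,2} y_{2,1}$ — and admit explicit replacements such as
\[
  y_{1,1}^2 + 6(y_{1,1} y_{3,3} + y_{2,2} y_{3,3}), \quad y_{2,2}^2 + 6(y_{1,1} y_{3,3} + y_{2,2} y_{3,3}),
\]
and $y_{1,1} y_{2,2} + y_{1,2} y_{2,1} + 6(y_{1,1} y_{3,3} + y_{2,2} y_{3,3})$. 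A short check shows the surviving Shafiei generators together with the replacements are linearly independent and hence span $F^\perp_2$. After verifying $T_1 \cdot F^\perp_2 = F^\perp_3$, so that $F^\perp$ is still generated in degree $2$, one runs \texttt{betti res} in Macaulay2 on each presented ideal to read off $\beta_{5,6} = 100$.

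The main obstacle is that there is no obvious structural reason for this exact value: upper-semicontinuity of graded Betti numbers only furnishes $\beta_{5,6} \leq 100$ generically in $\ell$ and in principle permits a jump at special $\ell$. A more conceptual alternative would be to decompose the syzygy modules under the stabilizer of $F$ and track how the perturbation acts on each isotypic component, but for these two special cases the direct computation is simpler and conclusive.
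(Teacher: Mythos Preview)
Your proposal is correct and takes essentially the same approach as the paper, whose entire proof reads ``A direct computation in Macaulay2.'' Your explicit description of the quadratic generators of $F^\perp$ (obtained by perturbing Shafiei's generators) is a helpful elaboration of what that computation entails, but both arguments ultimately rest on the same \texttt{betti res} call.
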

\begin{proof}
A direct computation in Macaulay2.
\end{proof}

We will next show that $\beta_{5,6}(T/(\dett_3 - \lambda(x_{1,1} + x_{2,2} + x_{3,3})^3)^\perp) < 140$ for all $\lambda \in \field$.

It is convenient to simplify notation by replacing $x_{1,1},\dotsc,x_{3,3}$ with $x_1,\dotsc,x_9$,
with dual variables $y_1,\dotsc,y_9$,
where
\[
  X = \begin{pmatrix}
    x_{1,1} & x_{1,2} & x_{1,3} \\
    x_{2,1} & x_{2,2} & x_{2,3} \\
    x_{3,1} & x_{3,2} & x_{3,3}
  \end{pmatrix}
  =
  \begin{pmatrix}
    x_1 & x_2 & x_3 \\
    x_4 & x_5 & x_6 \\
    x_7 & x_8 & x_9
  \end{pmatrix},
  \qquad
  Y = \begin{pmatrix}
    y_{1,1} & y_{1,2} & y_{1,3} \\
    y_{2,1} & y_{2,2} & y_{2,3} \\
    y_{3,1} & y_{3,2} & y_{3,3}
  \end{pmatrix}
  =
  \begin{pmatrix}
    y_1 & y_2 & y_3 \\
    y_4 & y_5 & y_6 \\
    y_7 & y_8 & y_9
  \end{pmatrix}.
\]
It is also convenient for computations to introduce $\mu$ and homogenize as in the following statement.

\begin{proposition}
Let $\mu,\lambda \in \field$, $\mu \neq 0$,
and let $F = F(\mu,\lambda) = \mu \dett_3 - \lambda (x_1 + x_5 + x_9)^3$.
Then $F^\perp$ is generated by the following $36$ linearly independent quadrics.
\[
\begin{split}
  F^\perp = (&
      {y}_{2}^{2},
      {y}_{3}^{2},
      {y}_{4}^{2},
      {y}_{6}^{2},
      {y}_{7}^{2},
      {y}_{8}^{2},
      {y}_{1}^{2}-{y}_{9}^{2},
      {y}_{5}^{2}-{y}_{9}^{2},\\
      &
      {y}_{1} {y}_{2},
      {y}_{1} {y}_{3},
      {y}_{1} {y}_{4},
      {y}_{1} {y}_{7},
      {y}_{2} {y}_{3},
      {y}_{2} {y}_{5},
      {y}_{2} {y}_{8},
      {y}_{3} {y}_{6},
      {y}_{3} {y}_{9}, \\
      &
      {y}_{4} {y}_{5},
      {y}_{4} {y}_{6},
      {y}_{4} {y}_{7},
      {y}_{5} {y}_{6},
      {y}_{5} {y}_{8},
      {y}_{6} {y}_{9},
      {y}_{7} {y}_{8},
      {y}_{7} {y}_{9},
      {y}_{8} {y}_{9}, \\
      &
      {y}_{1} {y}_{6} + {y}_{3} {y}_{4},
      {y}_{1} {y}_{8} + {y}_{2} {y}_{7},
      {y}_{2} {y}_{9} + {y}_{3} {y}_{8}, \\
      &
      {y}_{2} {y}_{6} + {y}_{3} {y}_{5},
      {y}_{4} {y}_{8} + {y}_{5} {y}_{7},
      {y}_{4} {y}_{9} + {y}_{6} {y}_{7}, \\
      &
      {y}_{1} {y}_{5} + {y}_{2} {y}_{4} - {y}_{9}^{2},
      {y}_{1} {y}_{9} + {y}_{3} {y}_{7} - {y}_{9}^{2},
      {y}_{5} {y}_{9} + {y}_{6} {y}_{8} - {y}_{9}^{2}, \\
      &
       \mu {y}_{1}^{2}-6 \lambda ( {y}_{6} {y}_{8} + {y}_{3} {y}_{7} + {y}_{2} {y}_{4} ) \; ).
\end{split}
\]
\end{proposition}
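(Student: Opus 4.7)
The plan is to confirm the claim in three stages. Since $\mu\ne 0$, the form $F$ has the same apolar ideal as $\dett_3-(\lambda/\mu)(x_1+x_5+x_9)^3$, which by Corollary~\ref{corollary: concise} is concise. Because $F$ is a concise cubic in $9$ variables, $T/F^\perp$ is Artinian Gorenstein of socle degree $3$, with palindromic Hilbert function $(1,9,9,1)$, so $\dim_{\field} F^\perp_2 = \binom{10}{2}-9 = 36$. It therefore suffices to exhibit $36$ linearly independent quadrics in $F^\perp$ and to check that they actually generate the full ideal.

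The next step is to verify directly that each listed quadric annihilates $F$, organized by family. The six off-diagonal squares $y_j^2$ with $j\in\{2,3,4,6,7,8\}$ and the $18$ same-row and same-column monomial products $y_iy_j$ each contain an off-diagonal variable, so they kill both $\dett_3$ (by Shafiei's theorem) and $(x_1+x_5+x_9)^3$. The two differences $y_1^2-y_9^2$ and $y_5^2-y_9^2$ work because $y_i^2\apolarityaction \dett_3 = 0$ while $y_i^2\apolarityaction (x_1+x_5+x_9)^3 = 6(x_1+x_5+x_9)$ is independent of the diagonal index $i\in\{1,5,9\}$. The six off-diagonal permanent-type binomials (in which each term contains an off-diagonal factor, such as $y_1y_6+y_3y_4$) carry over verbatim from Shafiei's generators of $\dett_3^\perp$. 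The three trinomials $y_iy_j+y_ky_l-y_9^2$ correspond to $2\times 2$ permanents in which one of the two terms pairs two diagonal variables of $Y$; that diagonal pair contributes $6(x_1+x_5+x_9)$ on the cube, exactly cancelled by $y_9^2$. Finally, the mixed quadric $\mu y_1^2 - 6\lambda(y_6y_8+y_3y_7+y_2y_4)$ is engineered to cancel the residue $-6\mu\lambda(x_1+x_5+x_9)$ produced by $\mu y_1^2$ on $F$ against the residue $6\mu\lambda(x_1+x_5+x_9)$ produced by $-6\lambda(y_6y_8+y_3y_7+y_2y_4)$ on $F$, using the identity $(y_6y_8+y_3y_7+y_2y_4)\apolarityaction \dett_3 = -(x_1+x_5+x_9)$. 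Linear independence of the $36$ quadrics then follows from an easy inspection of their monomial supports inside the $45$-dimensional $T_2$.

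The final step, which is the main obstacle, is to show that the ideal $J$ generated by these $36$ quadrics is all of $F^\perp$ and not merely equal to $F^\perp$ in degree $2$. Following the pattern of the preceding proposition, this is done by a direct Macaulay2 computation, working symbolically in the parameters $\mu,\lambda$ with $\mu$ inverted, verifying that $T/J$ has Hilbert function $(1,9,9,1)$ and hence agrees with $T/F^\perp$. The delicate point is parameter-uniformity: the mixed generator degenerates at $\mu = 0$, so the argument must be restricted to the open locus $\mu \ne 0$. After checking $\dim J_3 = 164$ at a single point of $\{\mu\ne 0\}$, lower semicontinuity of $\dim J_3$ combined with the uniform a priori bound $\dim J_3 \le \dim F^\perp_3 = 164$ forces equality on the whole connected locus $\{\mu\ne 0\}$, so $J = F^\perp$ there.
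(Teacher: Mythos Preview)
Your first two stages are fine and in fact closely track the paper's argument: the Hilbert function $(1,9,9,1)$ via conciseness and Gorenstein symmetry, the case-by-case verification that each listed quadric kills $F$, and linear independence via leading monomials. The gap is in stage three. Lower semicontinuity of $\dim J_3$ tells you only that the locus $\{\dim J_3 \geq 164\}$ is \emph{open} in $\{\mu \neq 0\}$; combined with the a priori bound $\dim J_3 \leq 164$, this says the locus where $\dim J_3 = 164$ is open and nonempty, hence dense. But open dense is not the same as everything: nothing you have written rules out a closed proper subvariety of $\{\mu \neq 0\}$ on which the rank of the multiplication map drops to $163$. Connectedness does not help, since an open subset of a connected (even irreducible) variety need not be closed. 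A symbolic Macaulay2 computation over $\field(\mu,\lambda)$ has the same defect: it computes the generic Hilbert function, not the Hilbert function at every closed point.

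The paper closes this gap with a genuinely uniform argument, and it is worth seeing how. First, it observes (via \cite[Proposition~1.6]{MR3426613}) that a concise non-linear cubic can never have a degree-$4$ minimal generator in its apolar ideal, so one only needs to rule out degree-$3$ generators. Then it separates off the $35$ parameter-free quadrics: a single computation (independent of $\mu,\lambda$) shows they span a codimension-$2$ subspace of $T_3$. Finally, to show the $36$th generator contributes one more cubic for \emph{every} $\mu \neq 0$, the paper evaluates at the identity matrix: each of the first $35$ quadrics vanishes there, while $y_1(\mu y_1^2 - 6\lambda(y_6y_8 + y_3y_7 + y_2y_4))$ evaluates to $\mu \neq 0$. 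This pointwise-evaluation trick is exactly the missing ingredient that makes the bound hold for all parameter values rather than just generically.
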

The above generators were found with the assistance of Macaulay2.
\begin{proof}
It is easy to check that all of the listed generators annihilate $F$.
Let $H$ be the ideal generated by the $36$ listed generators on the right hand side,
so $H \subseteq F^\perp$.

By Corollary~\ref{corollary: concise}, $F$ is concise, so $F^\perp$ contains no linear form.
Therefore the Hilbert function of $F^\perp$ begins with $1,9,\dotsc$.
By Gorenstein symmetry, the full Hilbert function is $1,9,9,1$.
Therefore $(F^\perp)_2$ has codimension $9$ in the space of quadrics in $T = \field[y_1,\dotsc,y_9]$.
That space has dimension $\binom{8+2}{2} = 45$.
So $F^\perp$ contains $36$ linearly independent quadrics.

The $36$ quadrics generating $H$ are linearly independent because they have distinct leading monomials.
So $H$ agrees with $F^\perp$ in degree $2$ (as well as degrees less than $2$).

Since $F$ has degree $3$, $F^\perp$ contains all forms of degree $4$,
so $F^\perp$ certainly does not have any generator of degree greater than $4$.

And $F^\perp$ does not have a generator of degree equal to $4$.
Indeed,
\cite[Proposition 1.6]{MR3426613}
asserts that
if $G$ is a form of degree $d$ such that $G^\perp$ has a minimal generator of degree $d+1$,
then $G$ must be a power of a linear form.
But $F$ is concise in $9$ variables by Corollary~\ref{corollary: concise},
so $F$ is not a power of a linear form (such powers are concise with respect to $1$ variable).

Finally we eliminate the possibility that $F^\perp$ has any generators in degree $3$.
One computes directly that the first $35$ generators of $H$ generate a codimension $2$ space of cubics,
while $(F^\perp)_3$ has codimension $1$.
The element $\mu y_1^3 - 6 \lambda y_1 y_6 y_8$ is in $H$ (using the $36$th generator),
but not in the subideal generated by the first $35$ quadrics,
because each of those first $35$ quadrics, considered as a polynomial on matrices, vanishes on the identity matrix,
while $\mu y_1^3 - 6 \lambda y_1 y_6 y_8$ does not.
This shows that $H_3$ has codimension $1$, so $H$ coincides with $F^\perp$ in degree $3$.
Therefore the cubics in $F^\perp$ are generated by the quadrics, and thus the generators of $F^\perp$ are the quadrics.
\end{proof}

\begin{proposition}
With notation as above, $\beta_{5,6}(T/F^\perp) < 140$.
\end{proposition}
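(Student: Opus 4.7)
The plan is to use a Macaulay2 computation on a flat family of Artinian Gorenstein algebras, with upper semicontinuity of Betti numbers bridging a single specialization to every fiber. To set up the family, first observe that by Corollary~\ref{corollary: concise} the form $F(\mu,\lambda) = \mu\dett_3 - \lambda(x_1+x_5+x_9)^3$ is concise for every $(\mu,\lambda)$ with $\mu \neq 0$, so $T/F^\perp$ is Gorenstein with Hilbert function $(1,9,9,1)$, independent of the parameters. The family $\{T/F(\mu,\lambda)^\perp\}$ is therefore flat over the open locus $\{\mu \neq 0\} \subset \bbA^2$. Rescaling $F$ by $\mu^{-1}$ leaves $F^\perp$ unchanged, so one may assume $\mu = 1$ and work with a single-parameter family in $\lambda$.

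The main step is to produce a free resolution of the universal ideal $\mathcal{J} \subset R = \field[\lambda][y_1,\dotsc,y_9]$ generated by the $36$ explicit quadrics supplied by the previous proposition, now regarded as polynomials in the $y_i$ with coefficients in $\field[\lambda]$. Specializing $\lambda$ to any $\lambda_0 \in \field$ yields a (possibly non-minimal) free resolution of $T/F(1,\lambda_0)^\perp$, so the rank of the $(5,6)$ strand of the universal resolution is a uniform upper bound for $\beta_{5,6}$ at every specialization. The task reduces to verifying in Macaulay2 that this rank is strictly less than $140$. Since at the specialization $\lambda_0 = 0$ one already has $\beta_{5,6}(T/\dett_3^\perp) = 100$, one expects the universal $(5,6)$ rank to be close to $100$.

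The principal obstacle is that the Betti number is only upper semicontinuous across the flat family, so it can jump upward at closed subsets and a one-off computation at a single $\lambda_0$ does not control other fibers. The universal-resolution strategy sidesteps this at the cost of possible non-minimality. An alternative is to compute the generic value of $\beta_{5,6}$ over the function field $\field(\lambda)$ and then invoke upper semicontinuity to localize the locus $\{\lambda_0 : \beta_{5,6} \geq 140\}$ to a proper Zariski-closed, and hence finite, subset of $\bbA^1_\lambda$; each candidate exceptional $\lambda_0$ is then checked directly, with $\lambda_0 = 0$ already handled by the value $100$. Either route keeps the computation tractable in Macaulay2 and produces a quantitative bound well below $140$.
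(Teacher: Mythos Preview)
Your main strategy---form the universal ideal over the parameter ring, compute a free resolution of the family, and use flatness of $T/F^\perp$ over the base to conclude that specialization yields a (possibly non-minimal) resolution at every fiber, hence a uniform upper bound on $\beta_{5,6}$---is exactly what the paper does; the paper carries both parameters $\mu,\lambda$ rather than reducing to $\mu=1$, and obtains the explicit bound $\beta_{5,6}\le 135$, but the argument is otherwise identical. Your alternative route via the function field and upper semicontinuity is less satisfactory as stated, since upper semicontinuity only tells you the jumping locus is closed without identifying its points, so ``checking each candidate $\lambda_0$ directly'' would require additional work you do not describe; but this does not affect the correctness of your primary approach.
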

\begin{proof}
We may assume the field $\field$ is algebraically closed, since the graded Betti numbers do not change under field extension.
Let $\tilde{T} = T[\mu,\lambda] = \field[y_1,\dotsc,y_9,\mu,\lambda]$, where $y_1,\dotsc,y_9$ have degree $1$ and $\mu,\lambda$ have degree $0$.
Let $\tilde{H} \subset \tilde{T}$
be the ideal generated by the quadrics listed above, where $\mu,\lambda$ are read as variables instead of scalars.
(This is not the apolar ideal of ``$\tilde{F}$''; that apolar ideal would include $\mu^2, \mu\lambda, \lambda^2$.)
One computes in Macaulay2 a graded free resolution $\tilde{C}$ of $\tilde{T}/\tilde{H}$
with graded Betti numbers as follows.
\[
\begin{array}{r rrrrr rrrrr}
  &0&1&2&3&4&5&6&7&8&9\\
  \text{total:}&1&36&160&316&424&479&399&196&45&1\\
  \text{0:} & 1 &  . &   . &   . &   . &   . &   . &   . &  . & . \\
  \text{1:} & . & 36 & 160 & 315 & 289 & 135 &  56 &  28 &  8 & 1 \\
  \text{2:} & . &  . &   . &   1 & 135 & 344 & 343 & 168 & 37 & . \\
  \text{3:} & . &  . &   . &   . &   . &   . &   . &   . &  . & 1
\end{array}
\]
In particular, $\beta_{5,6}(\tilde{T}/\tilde{H}) = 135 < 140$.
Substituting any values of $\mu,\lambda \in \field$, $\mu \neq 0$, takes $\tilde{H}$ to $H = F^\perp$
and takes $\tilde{C}$ to a complex $C$ that resolves $T/H$.

We claim that this complex $C$ is exact.
For all values $\mu,\lambda \in \field$, $\mu \neq 0$, $F = F(\mu,\lambda)$ is a concise cubic form in $9$ variables.
So the Hilbert function of $T/F^\perp$ starts with $(1,9,\dotsc)$.
By Gorenstein symmetry it is $(1,9,9,1)$.
This is constant with respect to $\mu,\lambda$, so the family of algebras $T/F(\mu,\lambda)^\perp$
has the same Hilbert polynomial (the constant $20$) at every closed point of $\Spec \field[\mu^{\pm 1},\lambda]$.
We can choose a constant monomial cobasis for the family of ideals $F(\mu,\lambda)^\perp$
(i.e., a set of monomials whose images give a basis for the algebras $T/F(\mu,\lambda)^\perp$),
so that the family $T/F(\mu,\lambda)^\perp$ is locally free over $\field[\mu^{\pm 1},\lambda]$.
Therefore the Hilbert polynomial of the family $T/F(\mu,\lambda)^\perp$ is constant (at all points, not just closed points),
so the family is flat over $\field[\mu^{\pm 1},\lambda]$ \cite[Theorem III.9.9]{hartshorne}.
By induction on homological degree, the kernels of maps in $\tilde{C}$ are flat.
Substituting $\mu_0,\lambda_0$ for $\mu,\lambda$ is tensoring with $\field[\mu^{\pm 1},\lambda]/(\mu-\mu_0,\lambda-\lambda_0)$;
by flatness, exactness is preserved.

Now the minimal graded free resolution of $T/F^\perp$ is a quotient of $C$.
So $\beta_{5,6}(T/F^\perp) \leq \beta_{5,6}(C) = 135$.
\end{proof}

\begin{remark}
$C$ is not necessarily minimal.
It seems possible that in fact $C$ is never minimal, and $\beta_{5,6}(T/F^\perp) = 100$ for all $\mu,\lambda$ with $\mu \neq 0$.
\end{remark}

\begin{theorem}
$\rank(\dett_3) \geq 15$.
\end{theorem}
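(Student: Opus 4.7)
The plan is to derive a contradiction from the assumption $\rank(\dett_3) \leq 14$ by combining all of the pieces developed in the previous subsections. Suppose $\dett_3 = \ell_1^3 + \dotsb + \ell_r^3$ with $r \leq 14$. Then setting $\ell = \ell_1$ we get $\dett_3 - \ell^3 = \ell_2^3 + \dotsb + \ell_r^3$, which exhibits $\dett_3 - \ell^3$ as a sum of at most $13$ cubes. By the Apolarity Lemma, the reduced scheme $\{[\ell_2],\dotsc,[\ell_r]\} \subset \bbP V$ (or a subscheme thereof, if there are repeated $[\ell_i]$) gives a one-dimensional saturated homogeneous radical ideal $I \subseteq (\dett_3 - \ell^3)^\perp$ of degree at most $13$; by adding further general points to the support if necessary, we can arrange $I$ to have degree exactly $13$.

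Next I invoke Corollary~\ref{corollary: concise}: since $\dett_3$ is concise and indecomposable, $\dett_3 - \ell^3$ is concise, so $(\dett_3 - \ell^3)^\perp$ contains no linear form. The same is then true of $I \subseteq (\dett_3 - \ell^3)^\perp$. Proposition~\ref{prop: points betti bound 140} therefore applies to $I$ and yields
\[
  \beta_{5,6}(T/I) \geq 140 .
\]
On the other hand, Proposition~\ref{proposition: subideal betti number inequality} (applied to the inclusion $I \subseteq (\dett_3 - \ell^3)^\perp$, whose larger ideal contains no linear form) gives
\[
  \beta_{5,6}(T/I) \leq \beta_{5,6}\bigl(T/(\dett_3 - \ell^3)^\perp\bigr) .
\]

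Now I split into the three cases produced by the reduction proposition that classifies $\dett_3 - \ell^3$ up to the action leaving $\dett_3$ invariant. In cases (1) and (2), the previously established direct Macaulay2 computation gives $\beta_{5,6}(T/(\dett_3 - \ell^3)^\perp) = 100$. In case (3), the flatness and semicontinuity argument of the previous proposition gives $\beta_{5,6}(T/(\dett_3 - \ell^3)^\perp) \leq 135$. In every case, $\beta_{5,6}(T/(\dett_3 - \ell^3)^\perp) < 140$, contradicting $\beta_{5,6}(T/I) \geq 140$. Hence $\rank(\dett_3) \geq 15$.

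The only thing to take care of is the harmless technicality that the $\ell_i$ in the assumed decomposition need not be pairwise nonproportional. In that case the reduced scheme $\{[\ell_2],\dotsc,[\ell_r]\}$ has degree strictly less than $13$, but we may enlarge it by adjoining generic points to reach exactly $13$; the enlarged ideal still sits inside $(\dett_3 - \ell^3)^\perp$ since adding apolar points preserves apolarity, and it still contains no linear form by conciseness. With this in place, the proof is a clean assembly, so there is no genuine obstacle beyond verifying that all the ingredients apply cleanly to the subtracted form $\dett_3 - \ell^3$.
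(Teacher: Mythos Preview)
Your argument is essentially the paper's own proof, just with the implicit step ``$\beta_{5,6}<140 \Rightarrow \rank(\dett_3-\ell^3)\geq 14$'' unpacked explicitly (and your padding-to-degree-$13$ trick is correct, since enlarging the point set shrinks the ideal). The only omission is the field hypotheses: the paper first disposes of characteristic $2$ or $3$ trivially and then passes to an extension closed under cube roots, which is required for the reduction proposition you invoke to classify $\ell$ into the three cases.
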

\begin{proof}
If $\field$ has characteristic $2$ or $3$ then $\dett_3$ cannot be expressed as a sum of cubes, so $\rank(\dett_3) = \infty$.
Indeed, in any cube $\ell^3$, the coefficient of any term of the form $x_i x_j x_k$ with $i,j,k$ distinct is divisible by $6$.

So we can assume $\field$ has characteristic $0$ or strictly greater than $3$.
Then all the above discussion about apolarity applies
(since it assumed that the field had characteristic $0$ or strictly greater than the degree of the polynomial under consideration).
If $\field$ is not closed under taking cube roots, then we pass to an extension of $\field$.
This can only decrease the Waring rank, so it is sufficient to get a lower bound over the extension field.
We may thus assume that $\field$ is closed under cube roots.

We have $\beta_{5,6}(T/(\dett_3 - \ell^3)^\perp) < 140$ for all linear forms $\ell$.
Therefore $\rank(\dett_3 - \ell^3) \geq 14$ for all $\ell$.
Hence $\rank(\dett_3) \geq 15$.
\end{proof}

\section{Remarks}

One might try to prove $\rank(\dett_3) \geq 16$ by considering $\dett_3 - \ell^3 - m^3$.
After a change of linear coordinates puts $\ell$ in normal form, a further linear change of coordinates to normalize $m$
must now fix both $\dett_3$ and $\ell$.
To simplify, we can assume $\ell = x_1 + x_5 + x_9$, with coefficient matrix given by the identity matrix.
Indeed, since $\det(Y) \aa \dett_d = 6 \neq 0$, there is no power sum decomposition of $\dett_d$ consisting entirely of terms
whose coefficient matrices are singular.
There is always at least one term with coefficient matrix of full rank, which we choose to be $\ell$.
Now the joint stabilizer of $\dett_3$ and the identity matrix is $\GL_3$ acting by conjugation.
Under this action we can put the coefficient matrix of $m$ in Jordan canonical form.
There are three cases: the coefficient matrix of $m$ may have Jordan type $(1,1,1)$, $(2,1)$, or $(3)$.
One may show that $\dett_3 - \ell^3$ is indecomposable and concise, hence $\dett_3 - \ell^3 - m^3$ is concise.
So as before, it is sufficient to show $\beta_{5,6} < 140$ for each $\mu \dett_3 - \lambda \ell^3 - \kappa m^3$,
where $m$ involves unknown (variable) eigenvalues.
Unfortunately at this point computer computations are prohibitive; we were not able to compute the needed free resolutions and graded Betti numbers.

The currently best known bounds in the case $d=4$ are $50 \leq \rank(\dett_4) \leq 160$.
The syzygies of $\dett_4^\perp$ and of $49$ or $50$ points in $\bbP^{15}$ do not seem to be suitable for this approach.

To try to improve the bound for $\rank(\per_3)$ by similarly considering $\per_3 - \ell^3$ involves a great number of cases.
The group of linear coordinate changes that leaves invariant the permanent is precisely the product of the torus acting by scaling rows and columns
(with total scaling $1$), the permutation group acting on rows and columns, and the transposition.
This group has many orbits, including the orbit of a matrix with all entries nonzero and independent.
Then at most we can ensure that the coefficient matrix of $\ell$ has $5$ entries of $1$ (the first entries in each row and column),
the other $4$ entries being independent variables.
We were not able to compute the apolar ideal, resolution, and graded Betti numbers in this case.

\section*{Acknowledgements}

The first author was partially supported by grant VR2013-4545 from the Swedish Research Council.
This work was supported by a grant from the Simons Foundation (\#354574, Zach Teitler).

This work began at the Fields Institute in Toronto
as part of the 2016 Fall Semester in \emph{Combinatorial Algebraic Geometry}.
Additional work took place at the
Stefan Banach Institute for Mathematics of the Polish Academy of Sciences (IMPAN) in Warsaw
during the September 2018 workshop on Varieties and Group Actions,
a part of the Simons Semester on \emph{Varieties: Arithmetic and Transformations} in Warsaw,
September 1 to December 1, 2018.
This work was partially supported by the grant 346300 for IMPAN
from the Simons Foundation and the matching 2015-2019 Polish MNiSW fund.

We are deeply grateful to the organizers and hosting institutions
of both of those special semesters,
in 2016 in Toronto and in 2018 in Warsaw.
We are grateful to Jaros{\l}aw Buczy{\'n}ski, Joachim Jelisiejew, J.M.\ Landsberg, Roy Skjelnes,
and the anonymous referee for many helpful comments and discussion.



\bigskip

\bibliographystyle{amsplain}       
\renewcommand{\MR}[1]{}
\providecommand{\bysame}{\leavevmode\hbox to3em{\hrulefill}\thinspace}
\providecommand{\MR}{\relax\ifhmode\unskip\space\fi MR }
\providecommand{\MRhref}[2]{%
  \href{http://www.ams.org/mathscinet-getitem?mr=#1}{#2}
}
\providecommand{\href}[2]{#2}

\bigskip

\end{document}